\begin{document}

\newcommand{\ci}[1]{_{ {}_{\scriptstyle #1}}}
\newcommand{\ti}[1]{_{\scriptstyle \text{\rm #1}}}

\newcommand{\norm}[1]{\ensuremath{\|#1\|}}
\newcommand{\abs}[1]{\ensuremath{\vert#1\vert}}
\newcommand{\nm}{\,\rule[-.6ex]{.13em}{2.3ex}\,}

\newcommand{\p}{\ensuremath{\partial}}
\newcommand{\pr}{\mathcal{P}}

\newcounter{vremennyj}

\newcommand\cond[1]{\setcounter{vremennyj}{\theenumi}\setcounter{enumi}{#1}\labelenumi\setcounter{enumi}{\thevremennyj}}

\newcommand{\pbar}{\ensuremath{\bar{\partial}}}
\newcommand{\db}{\overline\partial}
\newcommand{\D}{\mathbb{D}}
\newcommand{\T}{\mathbb{T}}
\newcommand{\C}{\mathbb{C}}
\newcommand{\N}{\mathbb{N}}
\newcommand{\bP}{\mathbb{P}}

\newcommand{\bS}{\mathbf{S}}
\newcommand{\bk}{\mathbf{k}}

\newcommand\cE{\mathcal{E}}
\newcommand\cP{\mathcal{P}}
\newcommand\cC{\mathcal{C}}
\newcommand\cH{\mathcal{H}}
\newcommand\cU{\mathcal{U}}
\newcommand\cQ{\mathcal{Q}}

\newcommand{\be}{\mathbf{e}}

\newcommand{\la}{\lambda}
\newcommand{\e}{\varepsilon}

\newcommand{\td}{\widetilde\Delta}

\newcommand{\tto}{\!\!\to\!}
\newcommand{\wt}{\widetilde}
\newcommand{\shto}{\raisebox{.3ex}{$\scriptscriptstyle\rightarrow$}\!}

\newcommand{\La}{\langle }
\newcommand{\Ra}{\rangle }
\newcommand{\ran}{\operatorname{ran}}
\newcommand{\tr}{\operatorname{tr}}
\newcommand{\codim}{\operatorname{codim}}
\newcommand\clos{\operatorname{clos}}
\newcommand{\spn}{\operatorname{span}}
\newcommand{\lin}{\operatorname{Lin}}
\newcommand{\rank}{\operatorname{rank}}
\newcommand{\re}{\operatorname{Re}}
\newcommand{\vf}{\varphi}
\newcommand{\f}{\varphi}


\newcommand{\entrylabel}[1]{\mbox{#1}\hfill}

\newenvironment{entry}
{\begin{list}{X}%
  {\renewcommand{\makelabel}{\entrylabel}%
      \setlength{\labelwidth}{55pt}%
      \setlength{\leftmargin}{\labelwidth}
      \addtolength{\leftmargin}{\labelsep}%
   }%
}%
{\end{list}}



\numberwithin{equation}{section}

\newtheorem{thm}{Theorem}[section]
\newtheorem{lm}[thm]{Lemma}
\newtheorem{cor}[thm]{Corollary}
\newtheorem{prop}[thm]{Proposition}

\theoremstyle{remark}
\newtheorem{rem}[thm]{Remark}
\newtheorem*{rem*}{Remark}

\title[Similarity]{Similarity of operators and geometry of eigenvector bundles}
\author{Hyun-Kyoung Kwon}  \author{Sergei Treil}
\thanks{The work of S.~Treil was supported by the National Science Foundation under Grant  DMS-0501065}
\address{Department of Mathematics\\ Brown University\\ 151 Thayer Street Box 1917\\ Providence, RI USA  02912}

\subjclass[2000]{Primary 47A99, Secondary 47B32, 30D55, 53C55}

\begin{abstract}
We characterize the contractions that are similar to the backward shift
in the Hardy space $H^2$. This characterization is given in terms of the geometry of  the eigenvector bundles of the operators.
\end{abstract}

\maketitle
\setcounter{tocdepth}{1}
\tableofcontents
\section*{Notation}
\begin{entry}
\item[$:=$] equal by definition;\medskip
\item[$\C$] the complex plane;\medskip
\item[$\D$] the unit disk, $\D:=\{z\in\C:\abs{z}<1\}$;\medskip
\item[$\T$] the unit circle, $\T:=\p\D=\{z\in\C:\abs{z}=1\}$;\medskip
\item[$\La \cdot ,\cdot \Ra_H$] inner product on the space H;\medskip
\item[$\Delta$]Normalized Laplacian, $\Delta = \db \p = \p
\db = \frac14\left(\frac{\p^2}{\p x^2} + \frac{\p^2}{\p y^2} \right)$;\medskip
\item[$\mathfrak{S}_2$]Hilbert-Schmidt  class of operators;\medskip 
\item[$\norm{\cdot}, \nm\cdot \nm$]  norm; since we are dealing with matrix- and operator-valued functions, we will use the symbol $\|\,.\,\|$ (usually with a
subscript) for the norm in a function space, while $\nm\,.\,\nm$
is used for the norm in the underlying vector (operator) space.
 Thus, for a vector-valued function $f$ the symbol $\|f\|_2$ denotes its $L^2$-norm, but the symbol $\nm f\nm$ stands
for the scalar-valued function whose
value at a point $z$ is the norm of the vector $f(z)$;  \medskip

\item[$H^2$, $H^\infty$] Hardy classes of analytic functions,
$$
H^p := \left\{ f\in L^p(\T) : \hat f (k) := \int_\T f(z) z^{-k}
\frac{|dz|}{2\pi} = 0\ \text{for } k<0\right\}.
$$
Hardy classes can be identified with the spaces of functions that
are analytic in the unit disk $\D$: in particular, $H^\infty$ is
the space of all functions bounded and
analytic in $\D$; \medskip

\item[$H^2_E$] vector-valued Hardy class $H^2$ with values in
$E$; \medskip

\item[$H^2_n$] vector-valued Hardy class $H^2$ with values in $\C^n$; \medskip

\item[$L^\infty_{\!E_*\shto E}$] class of bounded functions on the unit
circle $\T$ whose values are bounded operators from $E_*$ to $E$; \medskip

\item[$H^\infty_{\!E_*\shto E}$] operator Hardy class  of bounded analytic
 functions whose values are bounded  operators from $E_*$ to $E$:
$$
\|F\|_\infty := \sup_{z\in \D}
\nm F(z)\nm=\underset{\xi\in
\T}{\operatorname{esssup}}\nm F(\xi)\nm; \medskip
$$\


\item[$T_\Phi$] Toeplitz operator with symbol $\Phi$;  and \medskip

\item[$\tr A$] trace of the operator A.

\medskip

\end{entry}

Throughout the paper all Hilbert spaces are assumed to be
separable. We always assume that in any Hilbert space, an
orthonormal basis is fixed so that any operator $A:E\to E_*$ can
be identified with its matrix. Thus, besides the usual involution
$A\mapsto A^*$ ($A^*$ is the Hilbert-adjoint of $A$), we have two more:
$A\mapsto A^T$ (transpose of the matrix) and $A\mapsto \overline
A$ (complex conjugation of the matrix), so $A^* =(\overline A)^T
=\overline{A^T}$. Although everything in the paper can be
presented in an invariant, ``coordinate-free'' form, the use of
transposition and complex conjugation makes the notation easier
and more transparent.

\setcounter{section}{-1}

\section{Introduction and result}
The main objects of this paper are operators with complete
analytic families of eigenvectors, the backward shift being one of
the simplest examples of such operators. 
Classification of such
operators up to unitary equivalence was completely done by M. J.
Cowen and R. G. Douglas in [4]. They had shown, in particular,
that if the eigenvector bundles of such operators are equivalent
as Hermitian holomorphic vector bundles, then they are unitarily
equivalent. They had also introduced numerous local criteria of the
equivalence of the eigenvector bundles (and so the unitary
equivalence of the corresponding operators).

We are interested in the problem of classification of such
operators up to similarity. Let us recall that operators $T_1$ and
$T_2$ are similar if there exists a (bounded) invertible operator
$A$ such that $T_1 = AT_2A^{-1}$. It was shown already in
\cite{CowenDouglas} that this problem is significantly more
complicated than unitary classification; in particular, it was
shown in \cite{CowenDouglas} that similarity of operators cannot
be expressed as a local condition on their eigenvector bundles.
So, we restrict ourselves to a particular case of the general
problem. Namely, we are interested in the case when an operator is
similar to the backward shift $S^*$ in the  Hardy
space $H^2$ (scalar or vector valued). 

Let us recall that the backward shift $S^*$ in the  Hardy space $H^2_E$ 
is the adjoint of the \emph{forward shift} $S$, $Sf(z) =zf(z)$, $f\in H^2$, and can be expressed as
$S^*f(z) =(f(z) -f(0))/z$, $f\in H^2$. The same formulas can be used to define $S^*$ on the vector Hardy space $H^2_E$. Sometimes, to emphasize that we are considering $S^*$ in the vector Hardy class $H^2_E$ we will use the notation $S^*_E$ (or $S^*_n$ if $\dim E=n$). 

Eigenvectors of $S^*$ are well known. Namely, the point spectrum (the set of eigenvalues) of $S^*$ is the open unit disc $\D$, and $S^*f =\la f$, $f\in H^2_E$, $|\la|<1$, if and only if $f$ can be represented as
$$
k_{\overline\la} e, \qquad e\in E;
$$
here $k_\la$ denotes  the reproducing kernel
\footnote{The function $k_\la$ is called the reproducing kernel because  
$(f, k_\la)= f(\la)$ for $f\in H^2$.  This property explains why  the notation $k_\la$ and not $k_{\overline \la}$ is used for the function $1/(1-\overline\la z)$. }
 of the (scalar) Hardy space $H^2$, $k_\la(z) := 1/(1-\overline\la z)$. Probably the easiest way to see that is to represent $k_{\overline\la}e $ as the geometric series, $k_{\overline\la}e =\sum_0^\infty \la^k z^k e$. 

We will also assume
that
the operator $T$ is contractive, i.e.,~that $\|T\|\le 1$.
As one can see from our result, one cannot expect a simple solution for the general case. 

One can say that for this case the problem was solved by
B.~Sz.-Nagy and C.~Foia\c{s} 
[9, Chap 9.1],\cite{NagyFoias2},\cite{NagyFoias3}
who proved that an operator $A$, $\|A\|\le1$, in a separable Hilbert space is similar
to an isometry if and only if its characteristic function is left
invertible in (operator-valued) $H^\infty$. Under our 
assumptions about $T$ the operator $T^*$ is completely non-unitary, 
so the similarity of $T^*$ to an isometry is equivalent to the similarity 
to the   forward shift
$S$, $Sf(z) =zf$, $f\in H^2_E$, in some (generally vector-valued)
$H^2$ space. Taking the adjoint operators, we obtain that $T$ is similar to the backward shift $S^*$ in 
some vector-valyed space $H^2$ if and only if the characteristic function of the operator $T^*$ is left invertible in $H^\infty$. 

However, we are interested in the
description only in terms of the geometry of the eigenvector
bundle, and the result of B.~Sz.-Nagy and C.~Foia\c{s} does not give
such
a description.

We assume the following about our linear operator $T$ on
a Hilbert space $H$:
\begin{enumerate}

\item
$T$ is contractive, i.e., $\|T\|\leq 1$;

\item
$\dim \ker(T-\lambda I)$ is constant for all $\lambda\in
\D$;

\item
$\spn \{\ker (T-\lambda I):\lambda\in \D\}=H$ ; and

\item the subspaces 
$\cE(\la)=\ker(T-\lambda I)$ form a  depend analytically on the spectral parameter
$\la \in \D$.\
\end{enumerate}

\medskip
Assumption \cond4 means that for each $\omega \in\D$ there exists a neighborhood $U_{\omega}$ of $ \omega$ and a left invertible in $L^\infty$ 
analytic operator-valued function $F_{\omega} $ defined   on $U_\omega$, $F_\omega(\la):E_*\to E$, such
that $\ran F_{\omega}(\lambda)=\cE(\lambda)$. It is easy to see that $\dim E_*$ must be the same for all $\omega$, so $\dim \cE(\la ) = \dim E_*$ for all $\la\in \D$, so condition \cond2 is redundant. 

If $\dim E_*<\infty$, then the columns of $F(\la)$ form a basis in $\cE(\la )$, so the disjoint union $\coprod_{\la\in\D} \cE(\la) =\{(\la, v_\la) : \la\in \D, v_\la \in \cE(\la)\}$ is a holomorphic vector bundle over $\D$ (subbundle of the trivial bundle $\D\times H$) with the natural projection $\pi$, $\pi (\la, v_\la) =\la$.  

In the case $\dim E_*=\infty$, the above statement can be used as a definition of a holomorphic vector bundle of infinite rank.

We will follow the usual agreements and write $v_\la$ instead of $(\la, v_\la)$, which simplifies the notation. Note, that the subspaces  $\cE(\la)$ inherit the metric from the Hilbert space $H$, so our bundle $\coprod_{\la\in\D} \cE(\la) $ is a Hermitian holomorphic vector bundle.


One can state some assumptions about the operator $T$ that guarantee that condition \cond 4 holds. 
For example, it is proven in \cite{CowenDouglas} that for a bounded linear operator $T:H\to H$ such that for all 
$\lambda \in \D$ the operator $T-\lambda I$ is Fredholdm, $\ran (T-\lambda I) = H$ and $\dim\ker (T-\la I) \equiv \text{const}$, condition \cond4 holds.

In order to state the result of the paper, we define on the unit
disk $\D$ a projection-valued function $\Pi:\D\rightarrow
B(H)$ that assigns to each $\lambda \in \D$, the orthogonal
projection onto $\ker (T-\lambda I)$;
$$
\Pi(\lambda):= P_{\ker(T-\lambda I)}.
$$
This function is clearly $C^\infty$ and even real analytic in the operator norm topology, but in what follows we will only need the fact that   it is a
$\cC^2$ function, i.e., a function twice continuously
differentiable (in the operator norm topology).

Let us also recall that if $\cE$ and $\wt\cE$ are two holomorphic vector  bundles over the same set $\Omega$, then a  map   $\Psi: \cE\to \wt\cE$ is called a \emph{bundle map} if it is holomorphic, and for each $\la \in \Omega$ the restriction of $\Psi$ onto the  fiber $\cE(\la) := \pi^{-1}(\la)$ is a linear transformation from $\cE(\la)$ to $\wt\cE(\la) = \wt\pi^{-1}(\la)$. 

Now we are ready to state our main result:
\begin{thm} \label{t0.1} Let $T$ be a linear operator on a Hilbert space $H$
under assumptions (1) through (4) such that
$\dim\ker(T-\lambda I)=n<\infty$ for every $\lambda\in\D$. Let $\Pi(\lambda)$ be the orthogonal projection onto
$\ker(T-\lambda I)$. Then the following statements are equivalent:

\begin{enumerate}
\item
T is similar to the backward shift operator $S^*_n$ on $H^2_n$
via an invertible operator $A:H^2_n \rightarrow H$;

\item
The eigenvector bundles of $T$ and $S^*_n$ are ``uniformly equivalent''. i.e., there exists a holomorphic bundle map bijection $\Psi$ from the eigenvector bundle of
$S^*_n$ to that of $T$  such that for some constant $c>0$, 
$$
\frac{1}{c} \| v_{\lambda}  \|_{H^2_n} \leq \| \Psi ( v_{\lambda} )\|
_{H} \leq c \| v_{\lambda} \| _{H^2_n} 
$$
for all $v_{\lambda} \in \ker(S^*_nI-\lambda)$ and for all $\la\in \D$; 

\item
There exists a bounded subharmonic function $\vf$ such that
$$
\Delta \vf (z) \ge \left \bracevert \frac{\partial\Pi(z)}{\partial
z}\right \bracevert ^2_{\mathfrak{S}_2}-\frac{n}{(1-|z|^2)^2}
\quad \text{ for all } z\in \D.
$$
\item 
The measure 
$$
\left(\left \bracevert
\frac{\partial\Pi(z)}{\partial z} \right \bracevert
^2_{\mathfrak{S}_2}-\frac{n}{(1-|z|^2)^2}\right)(1-|z|)dxdy
$$ 
is  Carleson and the estimate 
$$
\left(\left\bracevert
\frac{\partial\Pi(z)}{\partial z} \right \bracevert
^2_{\mathfrak{S}_2}-\frac{n}{(1-|z|^2)^2}\right)^{\frac{1}{2}} \le
\frac{C}{1-|z|}
$$
holds.
\end{enumerate}

\end{thm}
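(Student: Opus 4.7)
The equivalence $(1)\Leftrightarrow(2)$ is transport of eigenvectors. Given a similarity $T=AS_n^*A^{-1}$, the restriction of $A$ to $\ker(S_n^*-\la I)$ lands in $\ker(T-\la I)$ and is bijective onto it; this restriction defines a holomorphic bundle map $\Psi$ with fibrewise norm pinched between $\|A^{-1}\|^{-1}$ and $\|A\|$. Conversely, one defines $A$ on the dense subspace $\spn\{k_{\overline\la}\be : \la\in\D,\,\be\in\C^n\}$ by $A(k_{\overline\la}\be):=\Psi(k_{\overline\la}\be)$; the uniform two-sided fibre bounds on $\Psi$ applied to finite linear combinations $f=\sum_j k_{\overline{\la_j}}\be_j$ translate into a comparison of Gram matrices in $H^2_n$ and in $H$, which is exactly the boundedness and bounded invertibility of $A$.

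The equivalences $(2)\Leftrightarrow(3)\Leftrightarrow(4)$ are the analytic-geometric heart of the theorem. The key dictionary is that $\nm\p\Pi/\p z\nm_{\mathfrak{S}_2}^2$ equals the trace of minus the curvature of $\cE=\coprod_{\la\in\D}\cE(\la)$: for any local holomorphic frame $F$ of $\cE$ with Gram matrix $G=F^*F$,
$$
\nm\p\Pi/\p z\nm_{\mathfrak{S}_2}^2\,dz\wedge d\overline z = -\tr K_\cE = \Delta \log\det G\,dz\wedge d\overline z,
$$
the first equality being a direct projection calculation. Applied to the canonical frame $(k_{\overline\la}\be_j)_{j=1}^n$ of $\cE_{S_n^*}$, this yields $-\tr K = n/(1-|\la|^2)^2\,dz\wedge d\overline z$, so the quantity featured in (3) and (4) is precisely the curvature excess of $\cE_T$ over $\cE_{S_n^*}$.

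The implication $(2)\Rightarrow(3)$ is then bookkeeping: locally transport a holomorphic frame of $\cE_T$ by $\Psi^{-1}$ and compare it with the canonical frame of $\cE_{S_n^*}$ via a holomorphic invertible matrix $M(\la)$; the patched logarithmic ratio of Gram determinants (the harmonic term $\log|\det M|^2$ is killed by $\Delta$) is bounded in terms of the fibre bounds on $\Psi$ and furnishes the required $\vf$. The equivalence $(3)\Leftrightarrow(4)$ is a classical characterization of nonnegative Laplacians of bounded subharmonic functions: the existence of bounded subharmonic $\vf$ with $\Delta\vf\ge d\mu$ is equivalent to $(1-|z|)d\mu$ being Carleson together with the pointwise density bound in (4).

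The substantive work, and the main obstacle, lies in $(3)\Rightarrow(2)$: starting only from a bounded subharmonic majorant one must manufacture a \emph{global} holomorphic frame $F$ of $\cE_T$ that is bounded and uniformly left invertible on $\D$, since then the multiplication operator $f\mapsto Ff$ realises the similarity between $S_n^*$ on $H^2_n$ and $T$. Hypothesis (4) supplies only local frames, and globalising them is a $\db$-problem for an operator-valued cocycle whose data are controlled by the curvature defect. The Carleson/subharmonic input from (3)--(4) is exactly what is needed to invoke $\db$-estimates of Wolff--Peter Jones type and obtain $L^\infty$-bounded solutions; upgrading these scalar estimates to the $n\times n$-matrix setting and controlling the left inverse of $F$ in $H^\infty$ so as to match the Sz.-Nagy--Foia\c s criterion recalled in the introduction is where the main analytic difficulty resides.
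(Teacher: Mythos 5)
Your outline contains one step that provably fails and one essential step that is described rather than proved. The failing step is the direct argument for $(2)\Rightarrow(1)$: you extend $A$ linearly to finite sums $\sum_j k_{\overline{\la_j}}e_j$ and assert that the fibrewise bounds on $\Psi$ ``translate into a comparison of Gram matrices.'' They do not. Condition (2) controls only the diagonal entries of the Gram matrix, $\|\Psi(k_{\overline{\la_j}}e_j)\|_H$ versus $\|k_{\overline{\la_j}}e_j\|_{H^2_n}$, and gives no information about the off-diagonal inner products $\La\Psi(k_{\overline{\la_i}}e_i),\Psi(k_{\overline{\la_j}}e_j)\Ra_H$ between \emph{different} fibres, so the norm of such a combination is not controlled. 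If this reasoning were valid it would prove $(2)\Rightarrow(1)$ without ever using $\|T\|\le1$, contradicting Theorem \ref{t5.1} of the paper, which produces an operator whose eigenvector bundle is almost isometrically equivalent to that of $S^*$ but which is not even quasisimilar to $S^*$. The contractivity must enter, and in the paper it does so through the Sz.-Nagy--Foia\c{s} model: $T\cong S^*_E\big|K$ with $\ker(T-\la I)=k_{\overline\la}\otimes E(\la)$, and Lemma \ref{l-curv-T} uses this tensor structure to identify the quantity in (3)--(4) with $\nm\p\Pi_2/\p z\nm^2_{\mathfrak{S}_2}$ (in particular proving the nonnegativity your subharmonicity claim silently needs). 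Your proposal never invokes the model; the correct proof of $(2)\Rightarrow(1)$ goes around the cycle $(2)\Rightarrow(4)\Rightarrow(3)\Rightarrow(1)$.

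The second gap is that $(3)\Rightarrow(1)$, which you rightly call the heart of the matter, is only named as ``a $\db$-problem of Wolff--Jones type to be upgraded to the matrix setting.'' That is a statement of the difficulty, not a proof. The paper resolves it by quoting the Treil--Wick theorem (Theorem \ref{t_Tr-W}), which converts the curvature hypothesis into a \emph{bounded analytic projection} $\cP(z)$ onto $E(z)$; it then takes the inner--outer factorization $\cP=\cP\ti{i}\cP\ti{o}$, proves $\cP\ti{o}\cP\ti{i}\equiv I$ (Lemma \ref{l3.3}), and realizes the similarity as the Toeplitz operator $T_{\cP\ti{i}^\sharp}$, verifying left invertibility, the intertwining, and that the range is exactly $K$. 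None of this machinery appears in your sketch. Two smaller points: your route $(2)\Rightarrow(3)$ via $\vf=\log\det(F^*F)+n\log(1-|\la|^2)$ is a legitimate alternative to the paper's $(2)\Rightarrow(4)$, which instead uses the elementary bounds $\nm F'(z)\nm\le C/(1-|z|)$ and the Carleson property of $\nm F'(z)\nm^2(1-|z|)\,dx\,dy$ for bounded analytic $F$; but your claimed ``classical'' equivalence $(3)\Leftrightarrow(4)$ is false as stated, since a bounded subharmonic majorant forces the Carleson condition on its Riesz measure but not the pointwise bound $(\Delta\vf)^{1/2}\le C/(1-|z|)$ --- the paper obtains the pointwise bound only from (2), using that the density is dominated by $\nm F'\nm^2$.
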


\begin{rem}
\label{rem0.2}We see in Section 1 that $\left \bracevert
\frac{\partial\Pi(z)}{\partial
z}\right \bracevert ^2_{\mathfrak{S}_2}-\frac{n}{(1-|z|^2)^2}\ge 0$.
\end{rem}

\begin{rem}
\label{rem0.3} Treating $\ker(T-\lambda I)$ as a subbundle of the trivial  bundle $H\times\D$, 
one can see that $-\frac{\partial\Pi(\lambda)}{\partial
\lambda}$ is its second fundamental form, so the mean curvature of
the eigenvector bundle $\ker(T-\lambda)$ is $-\left
\bracevert \frac{\partial\Pi(\lambda)}{\partial \lambda}\right
\bracevert ^2_{\mathfrak{S}_2}$. On the other hand,
$-(1-|\lambda|^2)^{-2}=\Delta \ln
({1-|\lambda|^2})$ is the curvature of the eigenvector
bundle of $S^*$, so $-n(1-|\lambda|^2)^{-2}$ is the mean curvature of the eigenvector bundle of the backward shift $S^*_n$ of multiplicity $n$. 

Thus, statements \cond 3 and \cond 4 are about the mean curvatures of the eigenvector bundles of $T$ and $S^*$. 
\end{rem}

\begin{rem}
\label{rem0.4}
Statement (3) of the theorem simply  means that the Green potential
$$
\mathcal G(\la) := \frac2\pi\iint_\D \ln \left|
\frac{z-\la}{1-\overline \la z}\right| \left (\left \bracevert
\frac{\partial\Pi(z)}{\partial z} \right \bracevert
^2_{\mathfrak{S}_2}-\frac{n}{(1-|z|^2)^2} \right)dxdy
$$
is uniformly bounded inside the unit disk $\D$. Integrating
separately over a small neighborhood of $\lambda$ and the rest of
$\D$, one can  easily see that \cond 4 $\implies$ \cond 3. 
\end{rem}

\section{Preliminaries} \subsection{Inner-outer factorization and invariant subspaces} Let us recall that an operator-valued function $F\in
H^\infty_{E_*\shto E}$ is called \emph{inner} if $F(z)$ is an
isometry a.e. ~on~$\T$, and  \emph{outer} if $FH_{E_*}^2$ is dense
in $H_{E}^2$. Every $F \in H^\infty_{E_*\shto E}$ can be
represented as $F=F_iF_o$ for an inner function $F_i \in
H^\infty_{E_{\#}\shto E}$, an outer function $F_o \in
H^\infty_{E_*\shto E_{\#}}$, and an auxiliary Hilbert space
$E_{\#}$. 

Let $S=S_E$ be the (forward) shift operator on $H^2_E$, $Sf =zf$, $f\in H^2_E$. The famous Beurling--Lax Theorem says that any non-zero $S$-invariant  subspace $\mathcal{E}\subset H^2_E$, $S\mathcal E\subset\mathcal E$,  can be represented as $\Theta H^2_{E_*}$, where $E_*$ is an auxiliary Hilbert space and $\Theta \in H^\infty_{E_*\shto E}$ is an inner function. The inner function $\Theta$ is unique up to a constant unitary factor on the right. 

The backward shift $S^*$, $S^*f=(f(z)-f(0))/z$, $f\in H^2_{E}$, is the adjoint of $S$, so any non-trivial invariant subspace $K\subset H^2_E$ of $S^*$ admits the representation $K=K_\Theta := H^2_E\ominus\Theta H^2_E$
 with some inner function $\Theta \in H^\infty_{E_*\shto E}$.

 \subsection{Tensor structure of the eigenvector bundle of $T$}
 \label{s1.1}   
The following 
theorem [8, Chap 0.2] plays a critical role in what follows.

\begin{thm}[Model Theorem] Every contraction $T$ on $H$ with the property that
$\lim_n \|A^n h\|=0$ for every $h\in H$ is unitarily equivalent to
$S^*_E\big|  K$ for some Hilbert space $E$ and an $S^*_E$-invariant
subspace $K$ of $H^2_E$.
\end{thm}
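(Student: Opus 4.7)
The strategy is to construct an explicit isometric embedding $V: H \to H^2_E$ that intertwines $T$ with $S^*_E$; then $K := VH$ will automatically be $S^*_E$-invariant and $V$ will implement the desired unitary equivalence. The natural candidate for $E$ comes from the defect operator $D_T := (I-T^*T)^{1/2}$, so I would set $E := \overline{\ran D_T}$, viewed as a subspace of $H$.

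Given this choice of $E$, I would define $V: H \to H^2_E$ by its Taylor coefficients:
$$
(Vh)(z) := \sum_{n=0}^\infty (D_T T^n h)\, z^n, \qquad h\in H.
$$
The intertwining property $VT = S^*_E V$ is a direct computation: applying $V$ to $Th$ just shifts the coefficient sequence $\{D_T T^n h\}_{n\ge 0}$ by one, which is exactly what $S^*_E$ does on $H^2_E$.

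The crucial step is to verify that $V$ is an isometry — this is where the hypothesis $\|T^n h\| \to 0$ enters. By the definition of $D_T$,
$$
\|D_T T^n h\|^2 = \langle (I-T^*T) T^n h, T^n h\rangle = \|T^n h\|^2 - \|T^{n+1}h\|^2,
$$
and telescoping the sum $\sum_{n=0}^\infty \|D_T T^n h\|^2 = \|h\|^2 - \lim_{n\to\infty}\|T^n h\|^2$ yields $\|Vh\|_{H^2_E}^2 = \|h\|^2$ precisely because $\|T^n h\|\to 0$. This simultaneously shows convergence of the defining series in $H^2_E$ and that $V$ preserves norms.

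Once these two facts are in hand, the theorem follows immediately: $V$ is isometric with closed range $K := VH \subset H^2_E$; the relation $VT = S^*_E V$ gives $S^*_E K \subset K$; and the same relation, read as $V^{-1} S^*_E|_K V = T$ (with $V^{-1}: K \to H$), furnishes the unitary equivalence. I do not foresee a genuine obstacle here; the only delicate point is making sure $V$ really lands in $H^2_E$ (not merely in $L^2_E(\T)$), which is automatic from the one-sided Taylor expansion. The crispness of the $C_{0\cdot}$ hypothesis is exactly what makes the telescoping argument collapse, so the proof is essentially forced once the defect operator is introduced.
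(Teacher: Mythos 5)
Your construction is correct and complete: the map $V h := \sum_{n\ge0}(D_T T^n h)z^n$ with $D_T=(I-T^*T)^{1/2}$ and $E=\clos\ran D_T$ is indeed an isometry exactly when $\|T^nh\|\to0$ (by the telescoping identity you state), the intertwining $VT=S^*_EV$ is immediate from the shift of Taylor coefficients, and $K:=VH$ is then a closed $S^*_E$-invariant subspace on which $V$ implements the unitary equivalence. This is the standard argument for the $C_{0\cdot}$ model theorem; the paper itself does not prove this statement but cites it from the literature (Nikolski; Sz.-Nagy--Foia\c{s}), and your proof matches the classical one given there, so there is nothing to add.
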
 

For our operator $T$ we trivially have $\lim_n \|T^n h\|=0$ for linear combinations of eigenvectors, which are dense in $H$ by assumption \cond3.  
 Since $T$ is a contraction,   $\|T^n\|\leq 1 $, so the standard 
$\e/3$ argument shows that the conditions of
Theorem 1.1. are satisfied.  So, without loss of generality, we can assume that $T$ is the  restriction of the backward shift $S^*$ in the vector Hardy space $H^2_E$ (where $E$ is an auxiliary Hilbert space) onto its invariant subspace $K\subset H^2_E$.  If $K=H^2_E$ the operator $T$ is  the backward shift, so we only need to consider the case when  $K$ is a proper subspace of $H^2_E$. In this case  $K$ can be represented as $K=K_\Theta$, where $\Theta\in H^\infty_{E_*\shto E} $ is an inner function.

Clearly an eigenvector of $T$ is an eigenvector of $S^*$, and the eigenvectors of $S^*$ are well known. As it was shown before in the introduction the eigenspace $\ker (S^*-\la I)$ of the backward shift $S^*$ in the scalar Hardy space $H^2$ 
spanned by the reproducing kernel $k_{\bar \la}$, where recall $k_\la(z) = 1/(1-\bar\la z)$. So in the case of the backward shift $S^*_E$ in $H^2_E$, 
$$
\ker(S^*_E - \la I)  =\{k_{\overline \la}(z) e: e\in E\}\,. 
$$

So, the eigenspaces of $T=S^*\big| K$ are given by 
$$
\ker(T-\la I) = k_{\overline\la} E(\la) =\{ k_{\overline\la} e: e\in E(\la)\},
$$
where the $E(\la)$ are some subspaces of the space $E$. 
The assumption \cond 3 that $\ker(T-\la I)$ is a holomorphic vector bundle implies that the subspaces $E(\la)$ depend analytically on the spectral parameter $\la$, i.e., that the family of subspaces $E(\la)$ is a holomorphic vector bundle as well.  

The vector valued Hardy space $H^2_E$ is a natural realization of the tensor product $H^2\otimes E$, so we can write 
$$
\ker(T-\la I) = k_{\overline\la} \otimes E(\la). 
$$

\begin{rem}
While it is not essential for the proof of the main result (Theorem \ref{t0.1}), it is easy to see that $E(\la)= \ker \Theta(\overline\la)^*$, where $\Theta\in H^\infty_{E_*\shto E}$ is the inner function such that $K=K_\Theta$. Indeed, an eigenvector $k_{\overline\la} e$ belongs to $K_\Theta$ if and only if $k_{\overline\la}e\perp \Theta H^2_{E_*}$.  Using the reproducing kernel property of $k_\la$ we get that for $h\in H^2_{E_*}$,
\begin{equation}
\La \Theta h, k_{\overline\lambda} e \Ra_{H^2_E} = 
\La \Theta(\overline\lambda)h(\overline\lambda),e \Ra_E 
=\La h(\overline\lambda),\Theta(\overline\lambda)^* e \Ra_E.
\end{equation}
Since $\{h(\overline\la):h\in H^2_{E_*}\}=E$,
we conclude that $k_{\bar\la} e \perp\Theta H^2$ iff $\Theta(\bar\la)^* e=0$.
Therefore, $E(\la)= \ker\Theta(\bar\la)^*$. 
\end{rem}

\begin{rem}The inner function $\Theta \in H^{\infty}_{E_* \rightarrow E}$
appearing above is the characteristic
function of the operator $T^*$, and therefore the spaces $E_*$ and $E$ can be identified with
$\clos(I-TT^*)^{\frac{1}{2}}H$ and $\clos (I-T^*T)^{\frac{1}{2}}H$, respectively [9, Chap 6.2].
\end{rem}

\subsection{Curvature of the eigenvector bundle of $T$} Let us compute the norm $\left\bracevert \p\Pi/\p z\right\bracevert_{\mathfrak{S}_2}^2$, where $\Pi(z)$ is the orthogonal projection onto $\ker (T-zI)$.  As we mentioned before, this expression is the mean curvature of the eigenvector bundle of $T$. 

Using the tensor structure  $\ker (T-\la I) = k_{\bar\la} \otimes E(\la)$ one can represent $\Pi(\la)$ as 
\begin{equation}
\label{Pi-tensor}
\Pi(\la)=\Pi_1(\la)\otimes\Pi_2(\la), 
\end{equation}
where $\Pi_1(\la)$ it the orthogonal projection in the (scalar) space $H^2$ onto $\spn\{k_{\bar\la}\}$, and 
$\Pi_2(\la)$ is the orthogonal projection in $E$ onto $E(\la)$. 

\begin{lm}
\label{l-curv-T}
In the above notation, if $\rank \Pi(\la) (=\rank \Pi_2(\la))= n<\infty$, then 
\begin{align*}
{\left \bracevert \frac{\partial\Pi(\lambda)}{\partial\lambda}\right \bracevert}^2_{\mathfrak{S}_2} & = 
{\left \bracevert \frac{\partial\Pi_1(\lambda)}{\partial\lambda}\right \bracevert}^2_{\mathfrak{S}_2} +
{\left \bracevert \frac{\partial\Pi_2(\lambda)}{\partial\lambda}\right \bracevert}^2_{\mathfrak{S}_2}
\\
& = 
\frac{n}{(1-|z|^2)^2} \, + 
{\left \bracevert \frac{\partial\Pi_2(\lambda)}{\partial\lambda}\right \bracevert}^2_{\mathfrak{S}_2}.
\end{align*}
\end{lm}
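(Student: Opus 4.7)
The plan is to differentiate the tensor product formula \eqref{Pi-tensor} by the Leibniz rule, expand the Hilbert--Schmidt norm squared of the result, kill the cross term, and reduce everything to multiplicativity of the HS norm under tensor products plus a standard reproducing-kernel computation. Concretely, writing
$$
\frac{\partial \Pi}{\partial \lambda} = A + B, \qquad A := \frac{\partial \Pi_1}{\partial \lambda} \otimes \Pi_2, \qquad B := \Pi_1 \otimes \frac{\partial \Pi_2}{\partial \lambda},
$$
one has $\nm \partial \Pi/\partial \lambda\nm_{\mathfrak{S}_2}^2 = \nm A\nm_{\mathfrak{S}_2}^2 + \nm B\nm_{\mathfrak{S}_2}^2 + 2\re\tr(A^* B)$. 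Using $\Pi_i^* = \Pi_i$ and the rule $(X_1\otimes Y_1)(X_2\otimes Y_2) = X_1 X_2 \otimes Y_1 Y_2$, the cross term reduces to
$$
2\re\bigl[\tr\bigl((\partial_\lambda\Pi_1)^* \Pi_1\bigr)\cdot\tr(\Pi_2\,\partial_\lambda \Pi_2)\bigr].
$$

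The key observation is that $\tr(P\,\partial_\lambda P) = 0$ for any $\cC^1$-family of projections of constant rank: differentiating $P^2 = P$ and taking the trace yields $2\tr(P\,\partial_\lambda P) = \tr(\partial_\lambda P) = \partial_\lambda \tr P = 0$, and then $\tr((\partial P)^* P) = \overline{\tr(P\,\partial P)} = 0$ since $P^* = P$. Since both $\Pi_1$ and $\Pi_2$ have constant rank, both scalar factors vanish and the cross term is zero. Next, the standard multiplicativity $\nm X \otimes Y\nm_{\mathfrak{S}_2}^2 = \nm X\nm_{\mathfrak{S}_2}^2 \nm Y\nm_{\mathfrak{S}_2}^2$ together with $\nm\Pi_1\nm_{\mathfrak{S}_2}^2 = \rank\Pi_1 = 1$ and $\nm\Pi_2\nm_{\mathfrak{S}_2}^2 = \rank\Pi_2 = n$ gives
$$
\nm A\nm_{\mathfrak{S}_2}^2 = n\,\nm \partial \Pi_1/\partial \lambda\nm_{\mathfrak{S}_2}^2, \qquad \nm B\nm_{\mathfrak{S}_2}^2 = \nm \partial \Pi_2/\partial \lambda\nm_{\mathfrak{S}_2}^2.
$$

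The last ingredient is the scalar computation $\nm \partial \Pi_1/\partial \lambda\nm_{\mathfrak{S}_2}^2 = 1/(1-|\lambda|^2)^2$. I would extract this from the general identity $\nm \partial \Pi_s/\partial \lambda\nm_{\mathfrak{S}_2}^2 = \partial_\lambda\bar\partial_\lambda \ln\nm s(\lambda)\nm^2$ valid for the rank-one orthogonal projection $\Pi_s$ onto $\spn\{s(\lambda)\}$ with $s$ a holomorphic section; this is a short direct Leibniz calculation starting from $\Pi_s f = \langle f,s\rangle s/\nm s\nm^2$ (leading to the formula $\partial \Pi_s = (I-\Pi_s)(\partial s) s^*/\nm s\nm^2$). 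Applied to $s(\lambda) = k_{\bar\lambda}$, for which $\nm k_{\bar\lambda}\nm^2 = 1/(1-|\lambda|^2)$, this gives $\partial_\lambda\bar\partial_\lambda\bigl[-\ln(1-|\lambda|^2)\bigr] = 1/(1-|\lambda|^2)^2$. Combining, $\nm A\nm_{\mathfrak{S}_2}^2 = n/(1-|\lambda|^2)^2$, which is the formula claimed in the lemma. I do not expect serious difficulty: the only step with genuine content is the vanishing of the cross term via $\tr(P\,\partial P) = 0$, and the reproducing-kernel curvature computation is entirely standard.
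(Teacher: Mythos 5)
Your proof is correct and follows the same skeleton as the paper's: Leibniz on $\Pi = \Pi_1\otimes\Pi_2$, vanishing of the cross term, multiplicativity of the Hilbert--Schmidt norm under tensor products together with $\nm P\nm_{\mathfrak{S}_2}^2=\rank P$, and a separate computation of $\nm\partial\Pi_1/\partial\lambda\nm_{\mathfrak{S}_2}^2$. You differ in two local steps. For the cross term the paper uses the operator identity $\Pi_2\,\frac{\partial\Pi_2}{\partial\lambda}=0$, which follows from Lemma \ref{l-PdP} (i.e.\ from the local representation $\Pi=F(F^*F)^{-1}F^*$) and makes $A^*B=0$ identically; you prove only the trace identity $\tr\bigl(P\,\partial_\lambda P\bigr)=0$ from $P^2=P$ and constancy of the rank. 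That weaker statement suffices, and it has the minor advantage of not using holomorphy, only a $\cC^1$ family of finite-rank projections of constant rank; the one point worth making explicit is the interchange $\tr(\partial_\lambda P)=\partial_\lambda\tr P$, which is legitimate here because the difference quotients have uniformly bounded rank, so operator-norm convergence implies trace-norm convergence. For the scalar factor you invoke the identity $\nm\partial_\lambda\Pi_s\nm_{\mathfrak{S}_2}^2=\partial\bar\partial\ln\nm s\nm^2$ for the orthogonal projection onto a holomorphic line subbundle and apply it to $s=k_{\bar\lambda}$; this is precisely the differential-geometric shortcut the paper acknowledges (Remark \ref{rem0.3} and the opening of the proof of Lemma \ref{curv-shift}) but then replaces by an explicit reproducing-kernel computation. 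Both routes yield $(1-|\lambda|^2)^{-2}$, so the conclusion stands.
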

To prove this lemma we will need a couple of well known and simple facts. 

\begin{lm}
\label{l-PdP} Let $E(\la)$, $\la\in \D$, be an analytic family of subspaces (holomorphic vector bundle), and let $\Pi(\la) $ be the orthogonal projection onto $E(\la)$. Then 
$$
(I-\Pi(z))\frac{\p\Pi(z)}{\p z} \Pi(z) = \frac{\p\Pi(z)}{\p z}. 
$$
\end{lm}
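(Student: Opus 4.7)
The plan is to derive the identity from two independent ingredients: the algebraic consequence of $\Pi$ being an idempotent, and the holomorphicity of the bundle $E(\la)$. Both of these are simple, and the real content lies in combining them correctly.

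First I would differentiate the identity $\Pi(z)^2=\Pi(z)$ with respect to $z$, obtaining
\begin{equation*}
\Pi \,\frac{\p\Pi}{\p z} + \frac{\p\Pi}{\p z}\,\Pi = \frac{\p\Pi}{\p z}.
\end{equation*}
This is the purely algebraic input and holds for any smooth family of projections.

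Next I would use holomorphicity. By the hypothesis that $E(\la)$ is a holomorphic vector bundle, around each point $\la_0\in\D$ there exists a local holomorphic frame $f_1(\la),\ldots,f_n(\la)$ spanning $E(\la)$, i.e., $\db f_j=0$ and $\Pi(\la)f_j(\la)=f_j(\la)$. Applying $\db$ to the latter identity gives $(\db\Pi)f_j + \Pi(\db f_j) = \db f_j$, so $(\db\Pi)f_j=0$. Since the $f_j(\la)$ span $\ran\Pi(\la)$ we conclude that $(\db\Pi)\Pi=0$ as operators. Taking Hilbert adjoints and using $\Pi^*=\Pi$ and $(\db\Pi)^* = \p\Pi$, this becomes
\begin{equation*}
\Pi\,\frac{\p\Pi}{\p z}=0.
\end{equation*}

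Finally I would combine the two. Plugging $\Pi \,\p\Pi/\p z =0$ into the algebraic identity gives $(\p\Pi/\p z)\Pi = \p\Pi/\p z$, so
\begin{equation*}
(I-\Pi)\,\frac{\p\Pi}{\p z}\,\Pi = (I-\Pi)\,\frac{\p\Pi}{\p z} = \frac{\p\Pi}{\p z} - \Pi\,\frac{\p\Pi}{\p z} = \frac{\p\Pi}{\p z},
\end{equation*}
which is the desired equality. There is no real obstacle here; the only conceptual point that must not be skipped is that the identity $(\db\Pi)\Pi=0$ genuinely uses holomorphicity of the bundle (not just smoothness of $\Pi$), so one must invoke the existence of a local holomorphic frame before passing to adjoints.
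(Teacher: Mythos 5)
Your proof is correct, but it takes a different route from the paper's. The paper also starts from a local holomorphic frame, but then writes the projection explicitly as $\Pi=F(F^*F)^{-1}F^*$ for a left-invertible analytic $F$ with $\ran F(\la)=E(\la)$, differentiates that formula directly (using $\p_z F^*=0$) to get $\p\Pi/\p z=(I-\Pi)F'(F^*F)^{-1}F^*$, and reads off the identity from $(I-\Pi)^2=I-\Pi$ and $F^*\Pi=F^*$. You instead isolate the two structural ingredients: differentiating $\Pi^2=\Pi$ gives $\Pi\,\p\Pi+(\p\Pi)\Pi=\p\Pi$, and the holomorphic frame gives $(\db\Pi)\Pi=0$, which after taking adjoints (using $\Pi^*=\Pi$ and $(\db\Pi)^*=\p\Pi$) yields $\Pi\,\p\Pi=0$; combining the two finishes the proof. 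Your version avoids differentiating $(F^*F)^{-1}$ and makes transparent exactly where holomorphicity (as opposed to mere smoothness) enters, while the paper's computation has the side benefit of producing the explicit formula for $\p\Pi/\p z$, which is reused later (e.g.\ in the proof of the implication from uniform equivalence to the curvature condition). One small remark: since the lemma is applied in the paper also when $\dim E(\la)=\infty$, you should phrase the frame as a left-invertible analytic operator-valued function $F$ rather than finitely many spanning sections $f_1,\dots,f_n$; the argument $\Pi F=F\Rightarrow(\db\Pi)F=0\Rightarrow(\db\Pi)\Pi=0$ goes through verbatim.
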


This lemma is a well known fact in complex differential geometry, but for the sake of completeness we present the proof. 

\begin{proof}[Proof of Lemma \ref{l-PdP}]
The fact that the family of subspaces $E(\la)$ is a holomorphic vector bundle means that locally  the subspaces $E(\la)$ can be represented as $\ran F(\la)$ where $F$ is a left invertible analytic operator-valued function. Given such a representation one can write the formula for $\Pi$, $\Pi=F(F^*F)^{-1}F^*$. Direct computation shows that 
$$
\frac{\partial\Pi(z)}{\partial z}=(I-\Pi(z))F'(z)(F(z)^*F(z))^{-1}F(z)^*
$$
and the conclusion of the lemma follows immediately. 
\end{proof}
\begin{lm}
\label{l-AxB-HS}
For operators $A$ and $B$ in the Hilbert-Schmidt class
$\mathfrak{S}_2$,
$$
{\| A \otimes B}\|^2_{\mathfrak{S}_2}=\|A
\|^2_{\mathfrak{S}_2}\| B\|^2_{\mathfrak{S}_2}.
$$
\end{lm}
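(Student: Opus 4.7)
The plan is to compute the Hilbert--Schmidt norm of $A\otimes B$ directly from its definition using a convenient orthonormal basis of the tensor product space. Let $A$ act on a Hilbert space $E_1$ and $B$ act on a Hilbert space $E_2$, and pick orthonormal bases $\{e_i\}$ of $E_1$ and $\{f_j\}$ of $E_2$. Then the family $\{e_i\otimes f_j\}$ is an orthonormal basis of $E_1\otimes E_2$, which is exactly what one needs to evaluate the $\mathfrak{S}_2$ norm.

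The key calculation is
$$
\|A\otimes B\|^2_{\mathfrak{S}_2}=\sum_{i,j}\nm (A\otimes B)(e_i\otimes f_j)\nm^2=\sum_{i,j}\nm Ae_i\otimes Bf_j\nm^2.
$$
Here I use the defining property of the operator $A\otimes B$ on elementary tensors. The next step is the elementary identity $\nm u\otimes v\nm = \nm u\nm\,\nm v\nm$ in the Hilbert tensor product, which follows from the definition of the inner product $\La u_1\otimes v_1,u_2\otimes v_2\Ra=\La u_1,u_2\Ra\La v_1,v_2\Ra$. Applying it and factoring the resulting double sum yields
$$
\sum_{i,j}\nm Ae_i\nm^2\nm Bf_j\nm^2=\Bigl(\sum_i\nm Ae_i\nm^2\Bigr)\Bigl(\sum_j\nm Bf_j\nm^2\Bigr)=\|A\|^2_{\mathfrak{S}_2}\|B\|^2_{\mathfrak{S}_2},
$$
which is the claim.

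There is no real obstacle here; the only thing to be slightly careful about is the meaning of $A\otimes B$ as an operator on the Hilbert space tensor product (as opposed to the algebraic tensor product) and the justification of the interchange of order of summation, but both follow from the fact that all terms in the double sum are non-negative, so Tonelli applies, and the boundedness of $A\otimes B$ on elementary tensors extends by linearity and continuity. Thus the lemma reduces to the factorization $\nm u\otimes v\nm=\nm u\nm\,\nm v\nm$ plus a Fubini-type rearrangement of a non-negative double series.
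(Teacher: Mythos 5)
Your proof is correct: choosing the orthonormal basis $\{e_i\otimes f_j\}$, using $\nm u\otimes v\nm=\nm u\nm\,\nm v\nm$, and factoring the non-negative double sum is exactly the standard argument. The paper itself gives no proof of this lemma (it is dismissed as ``well known and an easy exercise''), so your write-up simply supplies the expected computation; nothing is missing.
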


This lemma is well known and the proof is an easy exercise, so we omit it. 

\begin{proof}[Proof of Lemma \ref{l-curv-T}]

Using the product rule, we get from \eqref{Pi-tensor}
\begin{equation}
\frac{\partial\Pi(\la)}{\partial \la}
=\frac{\partial\Pi_1(\la )}{\partial \la }
\otimes\Pi_2(\la )+\Pi_1(\la )\otimes\frac{\partial\Pi_2(\la )}{\partial
\la } =: X + Y.
\end{equation}
The identity $\Pi_2(\la) \frac{\p \Pi_2(\la)}{\p \la} = 0$ (see Lemma \ref{l-PdP}) implies that $X^*Y=0$. Therefore
\begin{align*}
\left\bracevert X+Y\right\bracevert_{\mathfrak{S}_2}^2 = \tr X^*X + \tr Y^*Y + 2\re\tr (X^*Y) &
= \tr X^*X + \tr Y^*Y
\\
& = \left\bracevert X\right\bracevert_{\mathfrak{S}_2}^2 + \left\bracevert Y\right\bracevert_{\mathfrak{S}_2}^2
\end{align*}
Applying Lemma \ref{l-AxB-HS} to each term and recalling that for an orthogonal projection $P$ we have $\left\bracevert P\right\bracevert_{\mathfrak{S}_2}^2 = \rank P$, we get that 
$$
\left\bracevert \frac{\partial\Pi(\la)}{\partial \la} \right\bracevert_{\mathfrak{S}_2}^2 = n 
\left\bracevert \frac{\partial\Pi_1(\la)}{\partial \la} \right\bracevert_{\mathfrak{S}_2}^2 + 
\left\bracevert \frac{\partial\Pi_2(\la)}{\partial \la} \right\bracevert_{\mathfrak{S}_2}^2\,. 
$$
The lemma is proved modulo computation of $\left\bracevert \frac{\partial\Pi_1(\la)}{\partial \la} \right\bracevert_{\mathfrak{S}_2}^2$, which is done in the next lemma. 
 \end{proof}

\begin{lm}
\label{curv-shift}
Let $\Pi_1(\la)$ be the orthogonal projection onto $\spn\{k_{\bar\la}\}$ in $H^2$ (scalar valued). Then 
$$
\left \bracevert\frac{\partial\Pi_1(\lambda)}{\partial \lambda}
\right \bracevert^2_{\mathfrak{S}_2}
={(1-|\lambda|^2)^{-2}}\qquad\text{for all }\lambda \in \D.
$$
\end{lm}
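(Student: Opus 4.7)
The plan is to apply the derivative formula from the proof of Lemma~\ref{l-PdP} to the explicit holomorphic frame $F(\la)\colon\C\to H^2$ defined by $F(\la)c=c\,k_{\bar\la}$; since $\ker(S^*-\la I)=\spn\{k_{\bar\la}\}$ is a line bundle whose fibres are spanned by the anti-analytic continuation $\la\mapsto k_{\bar\la}$, this is the natural choice. Working through the formula $\Pi=F(F^*F)^{-1}F^*$ with $F$ analytic avoids the spurious $\bar\la$-derivatives that would appear if one normalized $k_{\bar\la}$ to a unit vector, and it makes $\p\Pi_1/\p\la$ manifestly rank one.

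First I would record, via the reproducing property, that $F(\la)^*F(\la)=\|k_{\bar\la}\|^2=(1-|\la|^2)^{-1}$ and that $F(\la)^*h=h(\bar\la)=\La h,k_{\bar\la}\Ra_{H^2}$. Differentiating the kernel gives $F'(\la)(z)=z/(1-\la z)^2$. Substituting into the identity of Lemma~\ref{l-PdP},
\[
\frac{\p\Pi_1(\la)}{\p\la}=(I-\Pi_1(\la))F'(\la)(F(\la)^*F(\la))^{-1}F(\la)^*,
\]
yields the rank-one operator $h\mapsto(1-|\la|^2)\,\La h,k_{\bar\la}\Ra_{H^2}\,(I-\Pi_1(\la))F'(\la)$.

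Second, a rank-one map $h\mapsto\La h,v\Ra u$ has Hilbert--Schmidt norm squared $\|u\|^2\|v\|^2$, so it only remains to compute $\|(I-\Pi_1(\la))F'(\la)\|^2$ and multiply. I would get $\|F'(\la)\|^2=(1+|\la|^2)/(1-|\la|^2)^3$ from the Taylor expansion $F'(\la)(z)=\sum_{k\ge 1}k\la^{k-1}z^k$, and $\|\Pi_1(\la)F'(\la)\|^2=|\la|^2/(1-|\la|^2)^3$ from the reproducing identity $\La F'(\la),k_{\bar\la}\Ra=F'(\la)(\bar\la)=\bar\la/(1-|\la|^2)^2$. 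Subtracting gives $\|(I-\Pi_1(\la))F'(\la)\|^2=(1-|\la|^2)^{-3}$, and combining with the scalar factors $(1-|\la|^2)^2$ and $\|k_{\bar\la}\|^2=(1-|\la|^2)^{-1}$ yields $(1-|\la|^2)^{-2}$.

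There is no essential obstacle: each step is either a reproducing-kernel identity or a geometric-series summation. The only point demanding care is the book-keeping of $\la$ versus $\bar\la$, which is exactly why I prefer the analytic-frame approach through $F(\la)=k_{\bar\la}$ rather than differentiating a normalized unit eigenvector, whose $(1-|\la|^2)^{1/2}$ prefactor is only real-analytic.
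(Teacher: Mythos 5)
Your proof is correct and is essentially the paper's own computation in a slightly different packaging: the paper differentiates the explicit formula $\Pi_1(\la)f=(1-|\la|^2)f(\bar\la)k_{\bar\la}$ by the product rule, whereas you route the same derivative through the frame identity of Lemma~\ref{l-PdP} with $F(\la)=k_{\bar\la}$ and evaluate $\|(I-\Pi_1(\la))F'(\la)\|^2$ by Pythagoras. Both versions reduce to the identical three quantities $\|k_{\bar\la}\|^2=(1-|\la|^2)^{-1}$, $\|\wt k_{\bar\la}\|^2=(1+|\la|^2)(1-|\la|^2)^{-3}$ and $\La\wt k_{\bar\la},k_{\bar\la}\Ra=\bar\la(1-|\la|^2)^{-2}$, and indeed $(1-|\la|^2)(I-\Pi_1(\la))F'(\la)$ is exactly the paper's vector $-\bar\la k_{\bar\la}+(1-|\la|^2)\wt k_{\bar\la}$.
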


\begin{proof}[Proof of Lemma \ref{curv-shift}]
The proof can a simple exercise in complex differential geometry, using the fact that the quantity in question is (up to the sign) the curvature of the eigenvector bundle of $S^*$; see Remark \ref{rem0.3}.  

However, for the convenience of the reader we present a direct computation (one of the  many possible). 

First, recall that $k_\la$ is the \emph{reproducing kernel} of $H^2$, i.e., $\La f, k_\la\Ra= f(\la)$ for all $f\in H^2$. Using the reproducing kernel property of $k_\la$ we conclude that $\|k_\la\|_2^2 = \La k_\la, k_\la\Ra = (1-|\la|^2)^{-1}$. Therefore for $f\in H^2$,
$$
\Pi_2(\la) f =\|k_{\bar\la}\|_2^{-2} \La f, k_{\bar\la} \Ra k_{\bar\la} = (1-|\la|^2) f(\bar\la) k_{\bar\la}.
$$
Taking $\frac{\p}{\p\la}$ and using the fact that $\frac{\p f(\bar\la)}{\p\la}=0$ we get 
$$
\frac{\p\Pi_2(\la)}{\p\la} f = f(\bar\la) \left( -\bar\la k_{\bar\la} + (1-|\la|^2) \wt k_{\bar\la} \right), 
$$
where
$$
\wt k_{\bar\la}(z) =\frac{\p}{\p\la} k_{\bar\la}(z) = \frac{z}{(1-\la z)^2}.  
$$ 
Note, that for $f\in H^2$,
\begin{equation}
\label{repr-k1}
\La f, \widetilde k_{\la}\Ra = f'(\la). 
\end{equation}

Using this identity one can get that 
$$
\|\wt k_{\la} \|_2^2 = \frac{1+|\lambda|^2}{(1-|\lambda|^2)^3} 
= \|\wt k_{\bar\la} \|_2^2.
$$
The reproducing property for $k_\la$ implies that 
$$
\La\wt k_{\bar\la} , k_{\bar\la}\Ra = \frac{\bar\la}{(1-|\la|^2)^2}.
$$

Combining all together we can conclude that 
$$
\|-\bar\la k_{\bar\la} + (1-|\la|^2) \wt k_{\bar\la}\|_2^2 = (1-|\la|^2)^{-1}.
$$

Since $f(\bar\la) = \La f, k_{\bar\la}\Ra$, and as we discussed above $\|k_{\bar\la}\|_2^2 = \|k_{\la}\|_2^2 = (1-|\la|^2)^{-1}$, 
$$
\left \bracevert\frac{\partial\Pi_1(\lambda)}{\partial \lambda}
\right \bracevert^2 = (1-|\lambda|^2)^{-2}. 
$$
Since (see Lemma \ref{l-PdP}) $\frac{\p\Pi(\la)}{\p\la}$
is a rank one operator, its operator and Hilbert--Schmidt norms coincide. 
\end{proof}

\section{From uniform equivalence of bundles to curvature condition}

In this section we are going to prove the implication \cond2$\implies$\cond4. 

Note that according to Lemma \ref{l-curv-T}
$$
{\left \bracevert \frac{\partial\Pi_2(\lambda)}{\partial\lambda}\right \bracevert}^2_{\mathfrak{S}_2} =
{\left \bracevert \frac{\partial\Pi(\lambda)}{\partial\lambda}\right \bracevert}^2_{\mathfrak{S}_2} -
\frac{n}{(1-|z|^2)^2} ,\, 
$$
so ${\left \bracevert \frac{\partial\Pi(\lambda)}{\partial\lambda}\right \bracevert}^2_{\mathfrak{S}_2} -
\frac{n}{(1-|z|^2)^2}$ in statements \cond3 and \cond4 of Theorem \ref{t0.1} can be replaced by the curvature  ${\left \bracevert \frac{\partial\Pi_2(\lambda)}{\partial\lambda}\right \bracevert}^2_{\mathfrak{S}_2}$. 

Let $\Psi$ be the uniformly equivalent bundle map bijection, as in condition \cond2. A bundle map means that $\Psi$ is an analytic function of $\la$, maps the  fiber $\ker(S^*_n-\la I)$ to the fiber $\ker(T-\la I)$ and is linear in each fiber $\ker(S^*_n-\la I)$. 

It is easy to see from the descriptions of $\ker(S^*_n-\la I)$ and $\ker(T-\la I)$ that any such bundle map bijection is represented by  
$$ 
\Psi(k_{\bar{\lambda}}e)=k_{\bar{\lambda}} \cdot F(\lambda)e, \qquad \forall e\in \C^n, 
$$
where    $F \in
H^{\infty}_{\C^n \rightarrow E}$ is an operator-valued function such that 
$$
\ran F(\lambda)= E(\la) \ (=\ker \Theta(\bar{\lambda})^*).
$$ 
The ``uniform equivalence'' property of $\Psi$ means that 
\begin{equation}
\label{un_eq}
c^{-1} I \le F^*F\le CI, \qquad \forall z\in \D. 
\end{equation}
Hence the orthogonal projection 
$\Pi_2(\la)$ from $E$ onto $E(\la)$ can be written down  as
$$
\Pi_2=F(F^*F)^{-1}F^*.
$$
Differentiating we get  $\frac{\partial\Pi_2(z)}{\partial
z}=(I-\Pi_2(z))F'(z)(F(z)^*F(z))^{-1}F(z)^*$, and taking into account \eqref{un_eq} we have
\begin{equation}
\label{2.2}
\left \bracevert \frac{\partial\Pi_2(z)}{\partial z} \right \bracevert \leq C\nm F'(z) \nm.
\end{equation}
The function $F $ takes values in
the Hilbert-Schmidt class $\mathfrak{S}_2$ which is a  Hilbert space, and for bounded analytic functions with values in a Hilbert space the estimate
$$
\nm F'(z) \nm \leq {C}/{(1-|z|)}
$$
holds, and the measure 
$$
 \nm F'(z) \nm ^2(1-|z|)dxdy 
$$
is Carleson. Combining these facts with \eqref{2.2} we conclude that the curvature condition \cond 4 holds.

\section{Curvature condition implies  similarity}
As we already mentioned in the Introduction, see Remark \ref{rem0.4} there, it is easy to show that condition \cond 4 implies condition \cond3. 

As it was already discussed in the beginning of the previous section, the expression ${\left \bracevert \frac{\partial\Pi(\lambda)}{\partial\lambda}\right \bracevert}^2_{\mathfrak{S}_2} -
\frac{n}{(1-|z|^2)^2}$ in statements \cond3 and \cond4 of Theorem \ref{t0.1} can be replaced by  ${\left \bracevert \frac{\partial\Pi_2(\lambda)}{\partial\lambda}\right \bracevert}^2_{\mathfrak{S}_2}$. So, the implication \cond3$\implies$\cond1 follows from the theorem below, which holds even in the case $\dim E(\la) = \infty$.

\begin{thm}
\label{t3.1}
Let $E(\la)$, $\la\in \D$, be an analytic family of subspaces of a Hilbert space $E$, and let $\Pi(\la)$ be the orthogonal projection onto $E(\la)$. Let
$$
T = S^*_E \big| K, 
$$
where $K$ is the $S^*$-invariant subspace of $H^2_E$, $K:= \spn\{ k_{\bar\la} e: \la\in \D, e\in E(\la)\}$. Suppose that there exists
a bounded, subharmonic function $\vf$ such that
$$
\Delta \vf (z) \ge \left \bracevert
\frac{\partial\Pi_2(z)}{\partial z}\right \bracevert^2 \quad
\forall z\in \D.
$$
Then $T$ is similar to the backward shift $S^*_{E_*}$, where $E_*$ is an auxiliary Hilbert space and $\dim E_*=\dim E(\la)$. 
\end{thm}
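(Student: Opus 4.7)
By the tensor decomposition of the eigenbundle worked out in Section~1, $T = S^*_E \mid K_\Theta$ with $\ker(T-\lambda I) = k_{\bar\lambda}\otimes E(\lambda)$, while the eigenspaces of the target $S^*_{E_*}$ are $k_{\bar\lambda}\otimes E_*$. An invertible intertwiner $A\colon H^2_{E_*}\to K_\Theta$ must send eigenspaces to eigenspaces; by holomorphic dependence of both bundles on $\lambda$, $A$ is determined on the dense span of eigenvectors by the bundle-map rule
\[
A(k_{\bar\lambda} e) \;=\; k_{\bar\lambda} F(\lambda) e, \qquad e \in E_*,
\]
for some holomorphic $F\colon\D\to B(E_*,E)$ with $\operatorname{ran} F(\lambda) = E(\lambda)$. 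As in the ``uniform equivalence implies $(4)$'' argument of Section~2, run in reverse, the rule above extends to a bounded invertible operator exactly when
\[
c^{-1} I \;\le\; F(\lambda)^* F(\lambda) \;\le\; c\,I, \qquad F \in H^\infty_{E_*\shto E}.
\]
The theorem thus reduces to constructing such an $F$.

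\textbf{Construction of $F$ via $\bar\partial$.} Pick any smooth frame $\Phi\colon\D\to B(E_*,E)$ of the bundle (for instance a smooth partial isometry onto $E(\lambda)$, which exists because the bundle $E(\lambda)$ is smoothly trivial over~$\D$): $\Phi^*\Phi = I_{E_*}$ and $\operatorname{ran}\Phi(\lambda) = E(\lambda)$. The failure of $\Phi$ to be analytic is controlled fiberwise by the curvature, because Lemma~\ref{l-PdP} forces $\bar\partial\Phi$ to take values in the bundle with Hilbert--Schmidt norm comparable to $\|\partial\Pi_2/\partial\lambda\|_{\mathfrak{S}_2}$. The plan is to correct $\Phi$ by a smooth $U(\lambda)\in B(E_*,E(\lambda))$ satisfying $\bar\partial U = -\bar\partial\Phi$, so that $F := \Phi + U$ is analytic with range in $E(\lambda)$. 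The subharmonic hypothesis $\Delta\varphi \ge \|\partial\Pi_2/\partial z\|^2_{\mathfrak{S}_2}$ with $\varphi\in L^\infty$ places us exactly in the setting of the $L^\infty$-$\bar\partial$ theorem of P.~Jones: a $(0,1)$-form whose squared norm is majorized by the Laplacian of a bounded subharmonic function admits a bounded solution, with $\|U\|_\infty$ controlled by $\|\varphi\|_\infty^{1/2}$. This is carried out in a Hilbert--Schmidt-valued version to accommodate $\dim E_* = \infty$.

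\textbf{Main obstacle.} The delicate step is securing the \emph{lower} bound $F^*F \ge c^{-1} I$, not the upper bound. The correction $U$ coming from the bare $L^\infty$ $\bar\partial$-solution can have norm comparable to $\|\varphi\|_\infty^{1/2}$ and thus dominate the partial isometry $\Phi$ in operator norm, destroying the invertibility of $F = \Phi + U$. The standard remedy, following the Uchiyama-style iteration familiar from constructive proofs of the corona theorem, is to localize the $\bar\partial$-problem on Carleson boxes, extract corrections of \emph{small} local $L^\infty$-norm (using the Carleson-measure form of the hypothesis as in condition~$(4)$), and glue them via a smooth partition of unity; the residual $\bar\partial$-errors introduced by the gluing and by the reprojection onto the bundle form a geometric series that one absorbs by iterating. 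This iterative scheme, together with the Hilbert--Schmidt-valued version of the scalar $L^\infty$-$\bar\partial$ estimate, is the technical core of the argument. Once $F$ is in hand, the verification that the bundle-map $A$ is a bounded invertible similarity between $S^*_{E_*}$ and $T$ is routine from the two-sided estimate on $F$ and the density of eigenvectors.
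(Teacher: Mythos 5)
Your reduction of the theorem to the construction of a single function $F\in H^\infty_{E_*\shto E}$ with $\operatorname{ran}F(\la)=E(\la)$ and $c^{-1}I\le F^*F\le cI$ is sound (though the justification should not be ``Section 2 run in reverse'': the correct argument is that $A=T_{F^\sharp}$, $F^\sharp(z):=F(\bar z)$, is an analytic Toeplitz operator, hence bounded by $\|F\|_\infty$, bounded below because $F^*F\ge c^{-1}I$ a.e.\ on $\T$, intertwining by \eqref{3.3}, and onto $K$ because it maps the dense span of the $k_{\bar\la}\otimes E_*$ onto a dense subset of $K$ and has closed range). The genuine gap is in the construction of $F$, which is where all the difficulty of the theorem lives. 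First, the claim that $\bar\partial\Phi$ for a smooth partial-isometry frame $\Phi$ is ``controlled fiberwise by the curvature'' is false: replacing $\Phi$ by $\Phi V$ with $V$ any smooth unitary-valued function gives another admissible frame with $\bar\partial(\Phi V)=(\bar\partial\Phi)V+\Phi\,\bar\partial V$, so the tangential component $\Pi_2\,\bar\partial\Phi$ is pure gauge and is not majorized by $\left\bracevert\partial\Pi_2/\partial z\right\bracevert_{\mathfrak{S}_2}$; only the normal component $(I-\Pi_2)\bar\partial\Phi$ (the second fundamental form) is. Second, you need the correction $U$ to take values in the \emph{moving} subspace $E(\la)$ while solving $\bar\partial U=-\bar\partial\Phi$; Jones's $L^\infty$--$\bar\partial$ theorem produces some bounded solution, but not one constrained to the subbundle, and without that constraint $F=\Phi+U$ has no reason to satisfy $\operatorname{ran}F(\la)\subset E(\la)$. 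Solving a $\bar\partial$-problem subject to this range constraint, with $L^\infty$ bounds coming from a bounded subharmonic majorant, and then securing a uniform lower bound, is precisely the content of the Treil--Wick theorem on bounded analytic projections \cite{TreilWick}; your ``Main obstacle'' paragraph defers exactly this to an unspecified Uchiyama-type iteration, so the proof as written is incomplete rather than merely terse.

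For comparison, the paper takes the Treil--Wick theorem as the black box: the hypothesis $\Delta\vf\ge\left\bracevert\partial\Pi_2/\partial z\right\bracevert^2_{\mathfrak{S}_2}$ yields a bounded analytic (not necessarily orthogonal) projection $\cP(z)$ onto $E(z)$, and the frame you are trying to build is then obtained with no further hard analysis as the inner factor $\cP\ti i$ of $\cP=\cP\ti i\cP\ti o$. The crucial lower bound comes for free from the algebraic identity $T_{\cP\ti i}T_{\cP\ti o}=T_{\cP\ti i}T_{\cP\ti o\cP\ti i}T_{\cP\ti o}$ (a consequence of $\cP^2=\cP$), which forces $\cP\ti o\cP\ti i\equiv I$ and hence $\cP\ti i^*\cP\ti i\ge\|\cP\ti o\|_\infty^{-2}I$. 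If you intend to bypass the Treil--Wick theorem, you must actually carry out its proof; otherwise the clean path is: cite it, factor the resulting projection, and prove the left-invertibility of the inner part as above.
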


\subsection{Toeplitz operators}
To prove Theorem \ref{t3.1} we will need to recall some simple facts about Toeplitz operators. 
Let us recall that given an operator-valued function 
$F\in L^\infty{E\shto E_*}$, the Toeplitz operator $T_F:H^2_E\to H^2_{E_*}$ with symbol $F$ is defined by the formula
$$
T_F f = P_+ (Ff),\qquad f\in H^2_E
$$
where $P_+$ is the orthogonal projection in $L^2$ onto $H^2$. 

If the symbol $F$ is analytic ($F\in H^\infty_{E\shto E_*})$, then the Toeplits operator $T_F$ is  simply  the multiplication operator (more precisely, its restriction onto $H^2$). 

It is easy to see that if $F, G\in H^\infty$, then 
\begin{equation}
\label{3.1}
T_F T_G = T_{FG}.
\end{equation}
We will also need the following well known and easy to prove fact that 
in $F\in H^\infty_{E\shto E_*}$, then 
\begin{equation}
\label{3.2}
T_{F^*} k_\la e = k_\la F^*(\la)e, \qquad e\in E_*
\end{equation}
and
\begin{equation}
\label{3.3}
T_{F^*} S^*_{E_*} = S^*_E T_{F^*}. 	
\end{equation}

\subsection{Proof of Theorem \ref{t3.1}}
 We want to prove the
existence of an invertible operator $A:H^2_n \to K$ satisfying the
intertwining relation $AS^*_n=TA$ (recall that $T=S^*_E  \big| K)$.

We will need the following theorem by  S. Treil and B.
D. Wick [17]. 

\begin{thm}
\label{t_Tr-W}
Let $\Pi:\D \to B(E)$ be a $\cC^2$ function whose values are
orthogonal projections in $E$, satisfying the identity $\Pi(z)
\frac{\p\Pi(z)}{\p z}=0$. Assume that for some bounded subharmonic
function $\vf$, we have
$$
\Delta \vf (z) \ge \left \bracevert \frac{\p\Pi(z)}{\p z}\right \bracevert^2\qquad
\text{for all }z\in \D.
$$

Then there exists a bounded analytic projection onto $\ran
\Pi(z)$, i.e., a function $\cP\in H^\infty_{E \shto E}$ such that
$\cP(z)$ is a projection onto $\ran\Pi(z)$ for all $z\in \D$.
\end{thm}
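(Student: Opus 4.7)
The plan is to reduce the existence of the bounded analytic projection to a bounded-solvability statement for an operator-valued $\db$-equation, with the bounded subharmonic majorant $\vf$ supplying the Carleson-measure control on the data that makes that solvability possible.

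I would begin with the algebraic reduction. The hypothesis $\Pi\p\Pi = 0$ combined with $\Pi^2 = \Pi$ gives $\p\Pi = \Pi^\perp(\p\Pi)\Pi$ (where $\Pi^\perp := I-\Pi$), and, taking adjoints, $\db\Pi = \Pi(\db\Pi)\Pi^\perp$. I would look for $\cP$ in the ansatz $\cP = \Pi + G$ with $G$ of off-diagonal form $G = \Pi G \Pi^\perp$. A short computation using this form gives $\cP^2 = \cP$ and $\ran\cP = \ran\Pi$ automatically, so the sole remaining condition — analyticity, $\db\cP = 0$ — becomes
$$
\db G(z) = -\db\Pi(z), \qquad z\in\D,
$$
for an unknown $G$ of off-diagonal form, bounded in operator norm on $\D$.

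Next I would convert the hypothesis into a Carleson-measure bound on the data. Since $(\p\Pi)^* = \db\Pi$, the Hilbert--Schmidt norms agree, $\nm\db\Pi\nm_{\mathfrak{S}_2} = \nm\p\Pi\nm_{\mathfrak{S}_2}$, and the hypothesis gives $\nm\p\Pi(z)\nm_{\mathfrak{S}_2}^2 \le \Delta\vf(z)$. A classical Green-formula computation over Carleson boxes shows that $\Delta\vf(z)(1-|z|^2)\,dx\,dy$ is a Carleson measure on $\D$ whenever $\vf$ is bounded subharmonic there, with Carleson norm controlled by the oscillation of $\vf$. It follows that $\nm\db\Pi(z)\nm_{\mathfrak{S}_2}^2(1-|z|^2)\,dx\,dy$ is Carleson on $\D$.

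The main obstacle is the final step: producing a bounded, off-diagonal solution $G \in H^\infty_{E\shto E}$ to the operator-valued $\db$-equation above. I would attack this via a weighted $L^2$ estimate of H\"ormander type, using a weight built from $\vf$ (or from a Green-potential regularization of $\Delta\vf$), which yields an $L^2$-solution whose weighted norm is controlled by the Carleson bound on the data; the pointwise $L^\infty$ estimate on $G$ is then extracted by a Cauchy-type reproducing formula on pseudohyperbolic disks, exploiting exactly the Carleson structure obtained above. Preserving the off-diagonal form $G = \Pi G \Pi^\perp$ is compatible with the setup, since $\db\Pi$ already has this form and, equivalently, one can seek the solution directly in the space of $\Pi(\cdot)\Pi^\perp$-valued functions. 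With such a $G$ in hand, $\cP := \Pi + G$ is the desired bounded analytic projection onto $\ran\Pi$. The nontrivial technical core is thus the bounded $\db$-solvability in this operator-valued setting; the surrounding algebra and the Carleson reduction from $\vf$ are essentially routine.
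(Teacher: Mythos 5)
First, a point of reference: the paper does not prove this theorem at all. It is imported as a black box from Treil--Wick \cite{TreilWick}, where its proof occupies essentially the whole of that article. So your proposal cannot be checked against an argument in the present paper; judged on its own merits, it has two genuine gaps, both located exactly where the real difficulty of \cite{TreilWick} lives.

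The first gap is in the algebraic reduction. You correctly note that $\cP=\Pi+G$ is a projection onto $\ran\Pi$ iff $G=\Pi G\Pi^\perp$, and that analyticity forces $\db G=-\db\Pi$. But these are two simultaneous requirements and the $\db$-operator does not respect the constraint: writing $G=\Pi H\Pi^\perp$ one gets
$$
\db G=(\db\Pi)H\Pi^\perp+\Pi(\db H)\Pi^\perp-\Pi H(\db\Pi),
$$
so the equation for the free unknown $H$ is a $\db$-equation perturbed by zero-order terms built from $\db\Pi$, not the pure equation you propose to solve. Conversely, an unconstrained bounded solution of $\db G=-\db\Pi$ yields an analytic, bounded $\cP=\Pi+G$ that need be neither idempotent nor have range equal to $\ran\Pi(z)$. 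Some mechanism for handling this coupling is missing, and it is not cosmetic: it is the reason the problem is not a single linear $\db$-problem.

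The second, more serious gap is the final analytic step. The only information you extract from the hypothesis is that $\nm\db\Pi\nm_{\mathfrak{S}_2}^2(1-|z|^2)\,dx\,dy$ is a Carleson measure, and you then assert that a H\"ormander-type weighted $L^2$ estimate plus a reproducing formula yields a \emph{bounded} solution. That quadratic Carleson condition on the data is not, by itself, a sufficient condition for bounded solvability of $\db u=f$: the known sufficient conditions are of a different nature (an $L^1$ Carleson condition on $\nm f\nm\,dx\,dy$ as in P.~Jones's theorem; or Wolff-type hypotheses which also control $\db f$; or the quadratic Carleson condition \emph{paired} with the pointwise bound $\nm f\nm\le C/(1-|z|)$ --- note this is precisely the pair appearing in statement (4) of Theorem \ref{t0.1}, whereas the subharmonic-majorant hypothesis here corresponds to the weaker statement (3)). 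Upgrading an $L^2$ solution to an $L^\infty$ one is exactly the corona-type obstruction, aggravated here by the operator-valued, possibly infinite-rank setting. The content of \cite{TreilWick} is to exploit the additional structure of the problem (the identity $\db\Pi=\Pi(\db\Pi)\Pi^\perp$, Green's formula tested against the bounded subharmonic $\vf$, and a nontrivial iteration) rather than a generic $\db$-estimate; your sketch replaces that core with a placeholder.
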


By Lemma \ref{l-PdP}, the function $\Pi$ from Theorem \ref{t3.1} satisfies the identity $\Pi(z)
\frac{\p\Pi(z)}{\p z}=0$, so applying Theorem \ref{t_Tr-W}  to it we get a bounded analytic projection $\cP(z)$ onto $\ran \Pi(z)$. Consider the inner-outer factorization $\cP = \cP\ti i \cP\ti o$ of $\cP$, where $\cP\ti i\in H^\infty_{E_*\shto E}$ is the inner part and $\cP\ti o\in H^\infty_{E\shto E_*}$ is the outer part of $\cP$. 
Define a function $\cP\ti i^\sharp$ by $\cP\ti i^\sharp (z) := \cP\ti i(\bar z)$, and consider the Toeplitz operator $T_{\cP\ti i^\sharp}$. Since $(\cP\ti i^\sharp)^* \in H^\infty_{E\shto E_*}$, \eqref{3.3} implies 
\begin{equation}
\label{3.4}
T_{\cP\ti i^\sharp} S^*_{E_*} = S^*_{E} T_{\cP\ti i^\sharp}.
\end{equation}
If we show that the operator $T_{\cP\ti i^\sharp}$ is left invertible and that $\ran T_{\cP\ti i^\sharp} = K$, we are done: the operator $A$ we want to find is simply the Toeplitz operator $T_{\cP\ti i^\sharp}$ treated as an operator $H^2_{E_*}\to K$. The left invertibility of $T_{\cP\ti i^\sharp}$ together with $\ran T_{\cP\ti i^\sharp} = K$ means that $A$ is invertible, and the intertwining $AS^*_{E_*} =TA$ follows from \eqref{3.4}. 

The left invertibility of $T_{\cP\ti i^\sharp}$ is a corollary of the following lemma.

\begin{lm}
\label{l3.3}
The outer part $\cP\ti o$ is a left inverse of $\cP\ti i$, i.e., $\cP\ti o \cP\ti i \equiv I$ for all $z\in \D$. 
\end{lm}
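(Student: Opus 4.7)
The plan is to exploit three facts: $\cP$ is an idempotent (as an operator-valued function, $\cP(z)^2=\cP(z)$ for all $z\in\D$), the inner factor $\cP\ti i$ is a pointwise isometry a.e.\ on $\T$, and the outer factor $\cP\ti o$ has dense range as a multiplication operator $H^2_E\to H^2_{E_*}$.

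First, I would substitute $\cP=\cP\ti i\cP\ti o$ into $\cP^2=\cP$, obtaining
\[
\cP\ti i\,\cP\ti o\,\cP\ti i\,\cP\ti o=\cP\ti i\,\cP\ti o
\]
pointwise on $\D$, hence also a.e.\ on $\T$. Since $\cP\ti i$ is inner, $\cP\ti i(\xi)^*\cP\ti i(\xi)=I_{E_*}$ for a.e.\ $\xi\in\T$; multiplying the displayed identity on the left by $\cP\ti i^*$ on $\T$ gives $\cP\ti o\,\cP\ti i\,\cP\ti o=\cP\ti o$ a.e.\ on $\T$. Both sides are boundary values of bounded analytic $E\to E_*$-valued functions on $\D$, so
\[
Q\,\cP\ti o=\cP\ti o \quad\text{on } \D, \qquad\text{where } Q:=\cP\ti o\,\cP\ti i\in H^\infty_{E_*\shto E_*}.
\]

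Next, I would interpret $Q\cP\ti o=\cP\ti o$ as an identity of multiplication operators on Hardy spaces. For every $f\in H^2_E$, the $H^2_{E_*}$-function $z\mapsto (Q(z)-I)\cP\ti o(z)f(z)$ vanishes identically. Since $Q-I\in H^\infty_{E_*\shto E_*}$, multiplication by $Q-I$ is a bounded operator on $H^2_{E_*}$; hence it vanishes on the $H^2_{E_*}$-closure of $\{\cP\ti o f:f\in H^2_E\}$. By the outer property of $\cP\ti o$, this closure is all of $H^2_{E_*}$. Applying this vanishing to the constant function $g(z)\equiv e$, $e\in E_*$, yields $(Q(z)-I)e=0$ a.e.\ on $\T$, and by analyticity for all $z\in\D$. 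Since $e$ is arbitrary, $Q\equiv I$, which is the claim.

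The only mildly delicate step is the passage from the pointwise identity $Q\cP\ti o=\cP\ti o$ to $Q=I$; it hinges precisely on the standard but crucial fact that outerness of $\cP\ti o$ translates into density of $\cP\ti o H^2_E$ in $H^2_{E_*}$, not merely pointwise surjectivity of $\cP\ti o(z)$ (which in general fails). Once that density is in hand, the argument closes cleanly by continuity of multiplication by an $H^\infty$ symbol on $H^2$.
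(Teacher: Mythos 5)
Your proof is correct and follows essentially the same route as the paper's: substitute the inner--outer factorization into $\cP^2=\cP$, cancel the inner factor on the left, and then use the dense range coming from outerness to cancel the outer factor on the right. The only cosmetic difference is that you cancel $\cP\ti i$ pointwise on the boundary using $\cP\ti i^*\cP\ti i=I$ a.e.\ on $\T$, whereas the paper cancels the analytic Toeplitz operator $T_{\cP\ti i}$ using its injectivity (it is an isometry); both are valid and equivalent in substance.
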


This lemma immediately implies that $T_{\cP\ti o^\sharp}$, where $\cP\ti o^\sharp(z) :=\cP\ti 0(\bar z) $, is a left inverse of $T_{\cP\ti i^\sharp}$. Indeed
$$
(T_{\cP\ti i^\sharp})^*(T_{\cP\ti o^\sharp})^* = T_{(\cP\ti i^\sharp)^*}T_{(\cP\ti o^\sharp)^*} = T_{(\cP\ti i^\sharp)^*(\cP\ti o^\sharp)^*} = I. 
$$
The last equality holds because $\cP\ti o \cP\ti i \equiv I$ and $T_I = I$; the previous one follows from \eqref{3.1} because $(\cP\ti i^\sharp)^*, (\cP\ti o^\sharp)^*\in H^\infty$. 

\begin{proof}[Proof of Lemma \ref{l3.3}]
It follows from \eqref{3.1} that 
$$
T_{\cP\ti i}  T_{\cP\ti o}  = T_\cP = T_{\cP^2} = T_{\cP\ti i \cP\ti o \cP\ti i \cP\ti o} = T_{\cP\ti i} T_{\cP\ti o \cP\ti i} T_{\cP\ti o} .
$$
The operator $T_{\cP\ti o}$ has dense range because $\cP\ti o$ is outer, and $\ker T_{\cP\ti i} =\{0\}$ because $\cP\ti i$ is inner (in fact, $T_{\cP\ti i} =\{0\}$ is an isometry).
Therefore $T_{\cP\ti o \cP\ti i} = I$, so $\cP\ti o \cP\ti i \equiv I$ for all $z\in \D$. 
\end{proof}

To complete the proof of Theorem \ref{t3.1} it remains to show that $\ran T_{\cP\ti i^\sharp} = K$. First of all, let us notice that 
$$
\ran \cP\ti i(\la) = E(\la) \qquad \forall \la \in\D. 
$$
Indeed, the inclusion $E(\la) = \ran \cP(\la) \subset \ran\cP\ti i(\la)$ is trivial because of the factorization $\cP= \cP\ti i \cP\ti o$. Since $\cP\ti o$ is outer, the set $\ran \cP\ti o(\la)$ is dense in $E_*$ for all $\la \in \D$. 
But $\cP\ti i(\la) \ran\cP\ti o(\la) = E(\la)$, so  $\ran \cP\ti i(\la) \subset E(\la)$. 
It follows from \eqref{3.2} that 
\begin{equation}
\label{3.5}
	T_{\cP\ti i^\sharp} k_{\bar\la} e = k_{\bar\la} \cP\ti i^\sharp(\bar\la)e = k_{\bar\la} \cP\ti i(\la)e. 
\end{equation}
To see that we got all the complex conjugates correctly, one can fix bases in $E$ and $E_*$, consider  the matrix representation of the operators, and notice that $\cP^\sharp(z) = (\cP^T(z))^*$. 
Then a direct  application of \eqref{3.2} implies \eqref{3.5}.

The equality $\ran\cP\ti i(\la) =E(\la)$ together with \eqref{3.5} imply that 
$$
T_{\cP\ti i^\sharp}k_{\bar\la}\otimes E_* = k_{\bar\la}\otimes E(\la). 
$$
Since $\spn\{k_{\bar\la}\otimes E_*:\la\in\D\} = H^2_{E_*}$, $\spn\{k_{\bar\la}\otimes E(\la):\la\in\D\} = K$, and the operator $T_{\cP\ti i^\sharp}$ is left invertible, we conclude that $\ran T_{\cP\ti i^\sharp} = K$. \hfill \qed

\section{Connection with a result by B. Sz.-Nagy and C. Foia\c{s}}

We already mentioned in the Introduction the following result by B. Sz.-Nagy and C. Foia\c{s} 
 [6, Chap 1.5], [9, Chap 9.1], [10], \cite{NagyFoias3}. 

\begin{thm}
\label{t_N-F}
A contraction $A$  ($\|A\|\le1$) in a Hilbert space is similar to an isometry if and only if its characteristic function is left invertible in $H^\infty$.   
\end{thm}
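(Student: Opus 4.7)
The plan is to reduce to the c.n.u.\ case by splitting off the unitary summand of $A$ (on which the theorem is trivial) and then to invoke the Model Theorem of Section~1: we may assume $A=T^*$ with $T=S^*_E\big|K_\Theta$ and $\Theta\in H^\infty_{E_*\shto E}$ the characteristic function of $A$. Under this identification, $A$ is similar to an isometry if and only if $T$ is similar to a backward shift $S^*_{F}$ on some vector Hardy space, so the theorem reduces to the equivalence of that similarity with left invertibility of $\Theta$ in $H^\infty$.

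For the direction where $\Theta$ is left invertible, suppose $\Psi\Theta\equiv I$ with $\Psi\in H^\infty_{E\shto E_*}$. Setting $\tilde\Theta(z):=\Theta(\bar z)^*$ and $\tilde\Psi(z):=\Psi(\bar z)^*$ gives $\tilde\Theta\in H^\infty_{E\shto E_*}$ and $\tilde\Psi\in H^\infty_{E_*\shto E}$ with $\tilde\Theta\tilde\Psi\equiv I_{E_*}$, and hence
$$
\cP(\la):=I_E-\tilde\Psi(\la)\tilde\Theta(\la)
$$
is a bounded analytic idempotent in $H^\infty_{E\shto E}$ with $\ran\cP(\la)=\ker\tilde\Theta(\la)=\ker\Theta(\bar\la)^*=E(\la)$. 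With this $\cP$ in hand, the construction from the proof of Theorem~\ref{t3.1} applies verbatim: the subharmonic majorant hypothesis was used there only to produce $\cP$ via Theorem~\ref{t_Tr-W}, and the remaining ingredients---inner--outer factorization $\cP=\cP\ti{i}\cP\ti{o}$, Lemma~\ref{l3.3}, and the analysis of $T_{\cP\ti{i}^\sharp}$---carry over without any finite-rank assumption on $E(\la)$. Hence $T\sim S^*_F$ for some auxiliary Hilbert space $F$, and taking adjoints yields $A\sim S_F$.

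For the converse, let $A=XVX^{-1}$ with $V$ an isometry. Standard Nagy--Foia\c{s} theory combined with the c.n.u.\ hypothesis on $A$ rules out any nontrivial unitary summand in the Wold decomposition of $V$, so $V\cong S_F$ for some $F$ and $T=A^*$ is similar to $S^*_F$ via $X^{-*}$. I would then apply the commutant lifting theorem to promote this intertwiner to a bounded analytic multiplier $M_G:H^2_F\to H^2_E$ with $G\in H^\infty_{F\shto E}$, arranging that $P_{K_\Theta}M_G$ be an isomorphism onto $K_\Theta$. Invertibility of the similarity yields uniform two-sided bounds $c^{-1}I\le G^*G\le cI$, and $\ran G(\la)=\ker\Theta(\bar\la)^*$ becomes a uniformly transverse complement of $\Theta(\la)E_*$ inside $E$ depending analytically on $\la$; composing the associated parallel projection onto $\Theta(\la)E_*$ with the canonical left inverse of $\Theta(\la)$ on its range then produces $\Psi\in H^\infty_{E\shto E_*}$ with $\Psi\Theta\equiv I$.

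The principal obstacle is this last step of the converse: extracting a bounded \emph{analytic} left inverse of $\Theta$ from the pointwise bundle data. The pointwise formula $(\Theta^*\Theta)^{-1}\Theta^*$ provides only an $L^\infty$ left inverse, and restoring analyticity hinges on the existence of a bounded \emph{analytic} projection onto $\Theta(\la)E_*$ along the complementary bundle $\ker\Theta(\bar\la)^*$. Producing such a projection from the mere existence of the similarity is exactly what the commutant lifting step accomplishes, and it is this analytic bundle structure---essentially the same content as the curvature hypothesis in Theorem~\ref{t0.1}---on which Theorem~\ref{t_N-F} really rests.
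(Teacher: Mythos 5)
First, a point of reference: the paper does not prove Theorem \ref{t_N-F} at all --- it is quoted from Sz.-Nagy and Foia\c{s} and used as a black box --- so there is no internal proof to compare against, and your attempt must stand on its own. It does not, for two reasons. The first gap is the reduction to the model $A^*=S^*_E\big|K_\Theta$. The Model Theorem of Section 1 applies only to contractions $T$ with $\lim_n\|T^nh\|=0$ for every $h$; for $A=T^*$ this reads $\lim_n\|(A^*)^nh\|=0$ and corresponds precisely to the characteristic function $\Theta$ being \emph{inner}. That condition is strictly stronger than complete non-unitarity, and splitting off the unitary summand of $A$ does not produce it: the theorem is stated for an arbitrary contraction, whose characteristic function need not be inner (the c.n.u.\ contraction with constant characteristic function $\tfrac12 I$ is covered by the statement and is indeed similar to an isometry), and for such operators the functional model carries an extra summand $\clos(\Delta L^2)$, $\Delta=(I-\Theta^*\Theta)^{1/2}$, about which your Toeplitz-operator mechanism says nothing. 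Within the inner case your forward direction is essentially sound: a left inverse $\Psi$ of $\Theta$ does yield the bounded analytic idempotent $\cP=I-\tilde\Psi\tilde\Theta$ onto $E(\la)=\ker\Theta(\bar\la)^*$, after which the proof of Theorem \ref{t3.1} runs unchanged --- this is exactly the link the paper sketches in Section 4 via Nikolski's lemma. But note that Theorem \ref{t3.1} produces a similarity of $S^*$ restricted to $\spn\{k_{\bar\la}e:\la\in\D,\ e\in E(\la)\}$, and you still owe the identification of this span with $K_\Theta$; it does hold here, since an $f\in K_\Theta$ orthogonal to all such kernels satisfies $f(\la)\in\ran\Theta(\la)$ pointwise, hence $f=\Theta\,(\Psi f)\in\Theta H^2_{E_*}$ and so $f=0$.

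The second and more serious gap is the converse. You describe the plan --- lift the similarity to a multiplier $G$ via commutant lifting and manufacture a bounded analytic left inverse of $\Theta$ from the resulting bundle data --- and then explicitly name the final step as ``the principal obstacle'' without resolving it. That step \emph{is} the content of the Sz.-Nagy--Foia\c{s} theorem: in their argument the lifting theorem applied to the intertwiner $X$ with $XA=VX$ produces an $H^\infty$ symbol whose product with $\Theta$ is computed explicitly and shown to be invertible with bounded analytic inverse, and the left inverse of $\Theta$ is read off from that identity. As written, the ``only if'' direction is asserted rather than proved, so the attempt establishes one implication under the additional hypothesis that $\Theta$ is inner and leaves the other as a conjecture.
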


We are not giving the definition of the characteristic function of a contraction here, because it is quite technical and is not relevant to our paper. The reader only needs to know that if $A=T^*$, where $T$ is the restriction of the backward shift $S^*$ onto an $S^*$-invariant subspace $K=K_\Theta\subset H^2_E$, then the inner function $\Theta$ is the characteristic function of $A$. 

If the operator $A=T^*$ is unitarily equivalent to an isometry $U$, 
the isometry $U$ must be unitarily equivalent to the forward shift $S_{E_*}$ in  $H^2_{E_*}$. 
Indeed, since $\lim_n \|T^n f \| = 0$ for all $f\in K$, the same holds for $U^*$, 
$\lim_n\|(U^*)^n x\|= 0$ for all $x$. 
But it is a well known fact (an easy corollary of  the Kolmogorov--Vold decomposition of isometries) that such isometry $U$ is unitarily equivalent to the forward shift $S$ in $H^2_{E_*}$. 
So, applying Theorem \ref{t_N-F} to our situation we get that $T$ is similar to the backward shift if and only if the inner function $\Theta$ is left invertible in $H^\infty$. 

We would like to investigate what  the relation between this statement and our result is. The following remarkable lemma by N. Nikolski provides that necessary connection.

\begin{lm}
Let $F\in H^\infty_{E_*\shto E}$ satisfy
$$
F(z)^*F(z)\ge \delta^2 I \quad\text{for all } z\in \D.
$$
Then $F$ is left invertible in $H^{\infty}$, i.e.,~there exists a $G\in
H^\infty_{E\shto E_*}$ such that $GF\equiv I$, if and only if
there exists a function $\cP\in H^\infty_{E\shto E}$ whose values
are projections (not necessarily orthogonal) onto $\ran F(z)$ for
all $z\in\D$.
\end{lm}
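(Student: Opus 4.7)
The plan is to prove both implications. For the easy direction $(\Rightarrow)$: given $G\in H^\infty_{E\shto E_*}$ with $GF\equiv I$, set $\cP:=FG\in H^\infty_{E\shto E}$. Then $\cP^2=F(GF)G=FG=\cP$, so $\cP(z)$ is a projection for each $z\in\D$. The inclusion $\ran\cP(z)\subseteq\ran F(z)$ is immediate, and the identity $F=F(GF)=(FG)F=\cP F$ gives $\ran F(z)\subseteq\ran\cP(z)$, so $\ran\cP(z)=\ran F(z)$.

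For the converse $(\Leftarrow)$, I would construct $G$ pointwise by the formula $G(z):=F(z)^{-1}\big|_{\ran F(z)}\cdot\cP(z)$. The lower bound $F(z)^*F(z)\ge\delta^2 I$ ensures that $F(z):E_*\to\ran F(z)=\ran\cP(z)$ is a bounded pointwise isomorphism with $\|F(z)^{-1}\|\le\delta^{-1}$, so $G(z)$ is well-defined and uniformly bounded, $\|G(z)\|\le\delta^{-1}\|\cP\|_\infty$. A direct computation then gives $F(z)G(z)=\cP(z)$ by construction, and $G(z)F(z)=I$ using the identity $\cP(z)F(z)=F(z)$ (which holds because $F(z)e_*\in\ran F(z)=\ran\cP(z)$).

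The one real difficulty is showing that $G$ is analytic, so that $G\in H^\infty_{E\shto E_*}$. My approach is a $\db$ argument: since $G$ is uniformly bounded on $\D$, it defines an operator-valued distribution; the identity $FG=\cP$ holds pointwise, and applying $\db$ distributionally together with the analyticity of $F$ and $\cP$ (so $\db F=0$ and $\db\cP=0$) yields $F\cdot\db G=0$. The uniform pointwise injectivity of $F$ then forces $\db G=0$ in the sense of distributions, and Weyl's lemma promotes $G$ to a genuinely analytic function. If one prefers to avoid distributional manipulations, the same conclusion can be reached locally by choosing, near any base point $z_0\in\D$, a holomorphic frame $e_1(z),\ldots,e_n(z)$ of the holomorphic subbundle $\ran\cP(z)$, lifting each $e_j$ through $F(z)^{-1}$ by solving the analytic equation $F(z)f_j(z)=e_j(z)$ (whose solution depends holomorphically on $z$ because $F$ is pointwise left-invertible with uniformly bounded inverse), and then expressing $G$ through the resulting analytic frames, which makes analyticity manifest.
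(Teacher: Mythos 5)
The paper does not actually prove this lemma: it is quoted as a known result of N.~Nikolski and used as a black box, so there is no in-paper argument to compare yours against. Judged on its own, your proof is essentially correct and complete. The forward direction ($\cP:=FG$) is the standard one and needs no lower bound on $F$ at all. In the converse direction your formula for $G$ is right, and it is worth noting that it can be written explicitly as $G(z)=(F(z)^*F(z))^{-1}F(z)^*\cP(z)$, which shows immediately that $G$ is continuous (hence locally integrable) and uniformly bounded by $\delta^{-1}\|\cP\|_\infty$, and the identities $GF\equiv I$, $FG\equiv\cP$ follow exactly as you say from $\cP F=F$.

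The one step you should tighten is the passage from $F\cdot\db G=0$ to $\db G=0$. ``Uniform pointwise injectivity of $F$'' is not by itself a licence to cancel $F$ against a distribution; what makes it work is that $F$ admits a \emph{smooth} (indeed real-analytic) pointwise left inverse $L:=(F^*F)^{-1}F^*$, thanks to $F^*F\ge\delta^2I$, and multiplying the distributional identity by the smooth function $L$ gives $\db G=L(F\,\db G)=0$; Weyl's lemma (applied entrywise, i.e.\ to the scalar distributions $z\mapsto\La G(z)e,f\Ra$, which also handles the infinite-dimensional case) then yields analyticity. Your alternative local-frame argument is slightly circular as written, since ``the solution of $F(z)f_j(z)=e_j(z)$ depends holomorphically on $z$'' is precisely the assertion being proved; the clean local fix is to freeze a left inverse $L_0$ of $F(z_0)$, observe that $A(z):=L_0F(z)$ is analytic with $A(z_0)=I$, hence analytically invertible near $z_0$, and check that $G(z)=A(z)^{-1}L_0\cP(z)$ there. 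With either repair the proof stands.
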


By this lemma, $T$ is similar to a backward shift if and only if there exists a bounded analytic projection $\cP(z)$ onto $\ran \Theta(z)$. 
Let $\cQ = I-\cP$ be the complementary projection. Then $\cQ(z)^*$ is a projection onto $(\ran\Theta(z))^\perp = \ker \Theta(z)^*$. But as we discussed above in Section \ref{s1.1}, $\ker \Theta(z)^* = E(\bar z)$. 

Notice, that the function $z\mapsto \cQ(z)^*$ is antianalytic, so $T$ is similar to the backward shift iff there exists a bounded antianalytic projection onto $\ker\Theta(z)^*$, or equivalently, a bounded analytic projection onto $E(z) = \ker \Theta(\bar z)^*$.  

Of course this is only a sketch and we leave all the details to the reader as an exercise.

\section{Remark about assumption $\|T\|\le1$}  

In this section we will show that the assumption $\|T\|\le 1$ is essential for Theorem \ref{t0.1}. We will show that if we omit this assumption, it is possible to construct an operator $T$ whose eigenvector bundle is uniformly equivalent to that of $S^*$ (in the scalar Hardy space), but such that $T$ ans $S^*$ 
are not even quazisimilar. 

Let us recall that operators $T_1$ and $T_2$ are called \emph{quazisimilar} if there exist operators $A$ and $B$ with dense ranges and trivial kernels such that 
$$
AT_1 = T_2 A, \qquad T_1 B = B T_2
$$
(if $T_1$ and $T_2$ are similar, then $B=AA^{-1}$). 
\begin{thm}
\label{t5.1}
Given $\e>0$, there exists an operator $T$ such that 
\begin{enumerate}
\item $T$ satisfies assumptions \cond2 through \cond4 from the introduction with $\dim \ker(T-\la I) =1$ for $\la \in \D$;
\item $T^*$ is ``almost isometry'', i.e.,
$$
(1+\e)^{-1} \|x\| \le \|T^*x \| \le (1+\e) \|x\| \qquad \forall x \in H;
$$
\item the eigenvector bundles of $T$ and $S^*$ are almost isomerically equivalent, i.e.,~there exists a bundle map bijection $\Psi$ from the eigenvector bundle of $S^*$ to that of $T$ such that 
$$
(1+\e)^{-1} \|v_\la\| \le \|\Psi(v_\la) \|\le (1+\e) \|v_\la \|
$$
for all $\la\in \D$ and for all $v_\la\in \ker (S^* -\la I)$;
\end{enumerate}
and such that the only operator  $A$ satisfying $AT=S^*A$ is $A=0$, so $A$ is not even quazisimilar to $S^*$. 
\end{thm}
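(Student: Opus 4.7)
The plan is to construct $T$ as a bounded but non-contractive perturbation of the backward shift, designed so that $T^*$ remains almost isometric and the eigenvector bundle stays almost isometrically equivalent to that of $S^*$, yet so that $T$ carries a global structural obstruction preventing any nonzero intertwining with $S^*$. Note that $T$ cannot be a contraction, since otherwise Theorem \ref{t0.1} would force similarity; so any construction is forced to violate $\|T\|\le 1$.

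The natural candidate is $T = S^* + \varepsilon K$ on $H^{2}$ for a suitably chosen bounded $K$, with $\varepsilon$ small. Standard analytic perturbation theory will then yield conditions (1)--(3) listed in the theorem: the eigenvalues $\lambda\in\D$ persist, the eigenvectors $v_\lambda = k_{\bar\lambda} + \varepsilon w_\lambda + O(\varepsilon^2)$ depend analytically on $\lambda$ with one-dimensional kernel, and the bundle map $\Psi: k_{\bar\lambda} \mapsto v_\lambda$ is almost isometric fiberwise. The almost-isometry of $T^* = S + \varepsilon K^*$ in the pinched sense $(1+\varepsilon)^{-1}\|x\| \le \|T^*x\| \le (1+\varepsilon)\|x\|$ follows from $\|T^*x\|^2 = \|x\|^2 + 2\varepsilon\operatorname{Re}\langle Sx, K^*x\rangle + \varepsilon^2\|K^*x\|^2$ together with boundedness of $K$, after rescaling $\varepsilon$.

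The crux is to show that no nonzero $A : H \to H^2$ satisfies $AT = S^*A$. Rewriting as the commutator equation $[S^*, A] = \varepsilon AK$, one tests on eigenvectors $k_{\bar\mu}$ of $S^*$ to extract $(S^* - \mu I)Ak_{\bar\mu} = \varepsilon AKk_{\bar\mu}$. Combined with the action on the $T$-eigenvectors, $Av_\lambda = c(\lambda)k_{\bar\lambda}$ for some holomorphic scalar $c$ (forced into $H^\infty$ by the boundedness of $A$ together with the almost-isometry of the bundle map), one obtains a system of constraints on $c$. An appropriate choice of $K$---for instance a finite-rank operator whose range together with the action of $S^*$ is sufficiently rich---makes this system incompatible with any nonzero $c$, yielding $c \equiv 0$ and hence $A = 0$ on the dense span of eigenvectors.

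The main obstacle is the simultaneous calibration of $K$: it must be small enough in operator norm to preserve the almost-isometric bundle picture and to keep $T^*$ almost isometric, yet structurally chosen so that the commutator equation $[S^*, A] = \varepsilon AK$ has only the trivial solution. This is precisely the tension exploited by the theorem: fiberwise almost-isometry (a local, perturbative condition) does not entail global boundedness of any candidate intertwiner, and exhibiting a $K$ that witnesses this gap---while analyzing the commutator equation carefully enough to rule out every nonzero Toeplitz-type $A$---constitutes the essential technical content.
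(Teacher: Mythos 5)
There is a genuine gap: the heart of the theorem is the non-existence of any nonzero intertwiner $A$ with $AT=S^*A$, and your proposal never actually establishes it. You reduce to a commutator equation $[S^*,A]=\e AK$ and to a holomorphic scalar $c$ with $Av_\la=c(\la)k_{\bar\la}$, and then assert that ``an appropriate choice of $K$'' makes the resulting constraints incompatible with $c\not\equiv 0$ --- but no such $K$ is exhibited and no incompatibility is derived; your closing paragraph concedes that this is ``the essential technical content.'' Worse, the specific suggestion of a \emph{finite-rank} (hence compact) $K$ points in an unpromising direction. The obstruction that kills all intertwiners here is not local or perturbative in the sense of spectral perturbation theory; it is a \emph{cumulative} one. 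The paper takes $T=\bS^*$, the backward shift on a weighted space $H^2_w$ with $(1+\e)^{-2}\le w_{n+1}/w_n\le(1+\e)^2$ (so $T-S^*$, in the natural orthonormal bases, is small in norm) but with $\sup_n w_n=\infty$, achieved by sparse, increasingly tall ``spikes'' in $\ln w_n$. The perturbation is small but emphatically non-compact: its effect compounds along the orbit, and the intertwining is destroyed by iterating the relation to get $\bS^nA^*=A^*S^n$ and comparing $\|\bS^nA^*f\|^2=\sum_j|a_j|^2w_{j+n}\ge|a_m|^2w_{m+n}\to\infty$ along a subsequence against the uniform bound $\|A^*S^nf\|\le\|A^*\|\,\|f\|$. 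A single application of the commutator identity, as in your plan, cannot see this; and a finite-rank $K$ perturbs $S^*$ only ``in a corner,'' giving no mechanism for the iterates to diverge.

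The remaining parts of your outline are also thinner than they look. Conditions (2) and (3) of the theorem are not automatic consequences of $\|K\|$ being small ``after rescaling $\e$'': condition (3) requires a fiberwise two-sided comparison of $\|v_\la\|$ with $\|k_{\bar\la}\|$ that is \emph{uniform over all} $\la\in\D$, including $|\la|\to1$, and this is exactly where the tension with $\sup_n w_n=\infty$ lives. In the paper this is the content of Lemma \ref{l5.3}: one must check $(1+\e)^{-2}k_\la(\la)\le\bk_\la(\la)\le(1+\e)^2k_\la(\la)$, which holds only because the spikes are placed sparsely enough that $\sum_j A_j\le\alpha$ with $A_j=\max_{0\le x\le1}(x^{N_j+1}-x^{N_j+2j})\le(2j-1)/(N_j+2j)$. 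So the calibration you correctly identify as ``the main obstacle'' is precisely what the paper resolves by an explicit choice of $\{N_j\}$, and your proposal does not supply a substitute. Your one solid observation --- that Theorem \ref{t0.1} forces $\|T\|>1$ for any such example --- is correct, but the construction and the two key lemmas are missing.
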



We will construct the operator $T$ as the backward shift, i.e., the adjoint of the forward shift $\bS$ in the space $H^2_w$, with the weight sequence  $w=\{w_k\}_1^\infty$ ($w_k> 0$): 
$$
H^2_{w} :=\left\{f = \sum_{n\ge0} a_n z^n : \|f\|^2_w:= \sum_{n\ge 0} |a_n|^2 w_n <\infty \right\}.  
$$
If one assumes that $\liminf_n |a_n|^{1/n}=1$, the space $H^2_w$ is a space of analytic in the unit disc $\D$ functions. Moreover, for all $\la\in\D$ the functional $f\mapsto f(\la)$ is bounded, so for each $\la\in \D$ there exists a unique function $\bk_\la \in H^2_w$ (the reproducing kernel of $H^2_w$) such that   
\begin{equation}
\label{5.0.5}
\La f, \bk_\la\Ra = f(\la) \qquad \forall f\in H^2_w. 
\end{equation}

The reproducing kernel $\bk_\la$ can be easily computed. Namely, it is easy to see that if $\{ \f_n \}_0^\infty$ is an orthonormal basis in $H^2_w$, then 
$$
\bk_\la (z) = \sum_0^\infty \overline{\f_n(\la)}\vf_n(z) .
$$  
Taking the orthonormal basis $\{z^n/\sqrt{w_n}\}_0^\infty$, we get
\begin{equation}
\label{5.1}
\bk_\la(z) = 	\sum_{n=0}^\infty \frac{z^n}{w_n}. 
\end{equation}

Note that the Hardy space $H^2$ is a particular case ($w_n=1$ for all $n$) of the space $H^2_w$, and formula \eqref{5.1} in this case gives the reproducing kernel $k_\la(z) =1/(1-\bar\la z)$ of $H^2$.

If one assumes that $\sup_n w_{n+1}/w_n<\infty$, then the shift operator $\bS$, $\bS f (z) =z f(z)$ is a bounded operator in $H^2_w$. The adjoint $\bS^*$ is called the \emph{backward shift}, and it is easy to see that 
$$
\bS^*\left(\sum_{n=0}^\infty a_n z^n \right) = \sum_{n=0}^\infty \frac{w_{n+1}}{w_n} a_{n+1} z^n. 
$$
From this formula and \eqref{5.1} one easily concludes that 
$$
\ker(\bS^*-\la I) = \spn\{\bk_{\bar\la}\}. 
$$

It follows from the reproducing kernel property that $\spn\{\bk_\la:\la\in\D\}=H^2_w$, so $T=\bS^*$ satisfies the assumptions \cond1--\cond4 from the Introduction. 

Condition \cond2 of Theorem \ref{t5.1} is satisfied if and only if  
\begin{equation}
\label{5.3}
(1+\e)^{-2}\le w_{n+1}/w_n \le (1+\e)^{2} \qquad \forall n\ge 0. 
\end{equation}

The mapping $\Psi$, $\Psi(ak_\la) = a\bk_\la$, $a\in\C$, $\la\in\D$, is clearly a holomorphic bundle map bijection between the eigenvector bundles of $S^*$ and $\bS^*$. Since $\|k_\la\|^2_{H^2} = \La k_\la, k_\la\Ra = k_\la(\la)\ (= (1-|\la|^2)^{-1} )$ and similarly $\|\bk_\la\|_{H^2_w}^2 = \bk_\la(\la)$, condition \cond3 of Theorem \ref{t5.1} is equivalent to the estimate
\begin{equation}
\label{5.4}
(1+\e)^{-2} k_\la(\la) \le \bk_\la(\la) \le (1+\e)^2 k_\la(\la) \qquad \forall \la \in \D.
\end{equation}
 
\begin{lm}
\label{l5.2}
If $\sup_n w_n=\infty$, then there is no non-zero bounded operator $A$ satisfying $A\bS^*=SA$.  
\end{lm}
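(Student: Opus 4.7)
My plan is to exploit the mismatch between the point spectra of the two operators: the backward shift $\bS^*$ on $H^2_w$ has every $\la\in\D$ as an eigenvalue (with eigenvector the reproducing kernel $\bk_{\bar\la}$), whereas the forward shift $S$ on $H^2$ has \emph{empty} point spectrum. An intertwining $A\bS^*=SA$ would transport each eigenvector of $\bS^*$ to an eigenvector of $S$ for the same eigenvalue, forcing $A$ to annihilate a dense set.

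Concretely, I would proceed in the following steps. First, I would verify that $\bS^*\bk_{\bar\la}=\la\bk_{\bar\la}$ for every $\la\in\D$, by substituting the explicit series
$$
\bk_{\bar\la}(z)=\sum_{n=0}^\infty \la^{n}\frac{z^{n}}{w_n}
$$
into the action $\bS^{*}(\sum g_n z^n)=\sum (w_{n+1}/w_n)g_{n+1}z^n$ so that the weights telescope, leaving the factor $\la$. Second, I would use the intertwining on an eigenvector: setting $g_\la:=A\bk_{\bar\la}\in H^2$, the identity $A\bS^{*}=SA$ yields
$$
z\,g_\la(z)=S g_\la(z)=SA\bk_{\bar\la}=A\bS^{*}\bk_{\bar\la}=\la\,g_\la(z),
$$
i.e.\ $(z-\la)g_\la(z)\equiv0$ in $H^2$. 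Since $z-\la$ is not a zero divisor in the algebra of analytic functions on $\D$, this forces $g_\la=0$, so $A\bk_{\bar\la}=0$ for every $\la\in\D$. Third, I would invoke density: by the reproducing property \eqref{5.0.5}, any $f\in H^2_w$ with $\La f,\bk_{\bar\la}\Ra_{H^2_w}=f(\bar\la)=0$ for every $\la\in\D$ must vanish identically, so $\spn\{\bk_{\bar\la}:\la\in\D\}$ is dense in $H^2_w$. Combining with Step 2 and boundedness of $A$ gives $A=0$.

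I do not expect a serious obstacle in this argument; the only delicate point is remembering that $S$ (the \emph{forward} shift) has no eigenvalues in $H^2$, which is what makes the intertwining so restrictive in contrast to the corresponding statement with $S^{*}$. It is worth flagging, however, that this argument never appeals to the hypothesis $\sup_n w_n=\infty$; as soon as $\bS$ is a bounded operator (so that $\bS^{*}$ makes sense and the kernels $\bk_{\bar\la}$ lie in $H^{2}_w$), the conclusion holds. The hypothesis $\sup_n w_n=\infty$ should therefore be regarded as carried along from the ambient construction of Theorem \ref{t5.1} rather than as something genuinely used in this lemma.
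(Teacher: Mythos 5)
Your argument is a correct proof of the statement as literally printed, but the statement as printed contains a typo, and what you have proved is not the lemma the paper needs. The intertwining relation in Lemma \ref{l5.2} should read $A\bS^*=S^*A$, with the \emph{backward} shift $S^*$ of the unweighted $H^2$ on the right: this is what the paper's own proof treats (its first line is ``Let $A\bS^*=S^*A$''), and it is what Theorem \ref{t5.1} requires, since the point there is that $T=\bS^*$ fails to be quasisimilar to $S^*$, which calls for killing every operator with $AT=S^*A$. Your own closing observation --- that the hypothesis $\sup_n w_n=\infty$ is never used --- is the symptom rather than a harmless curiosity: for the relation $A\bS^*=SA$ the conclusion is trivially true for \emph{every} admissible weight (any operator intertwining an operator whose point spectrum is all of $\D$ with one whose point spectrum is empty must vanish on the dense span of eigenvectors), so the lemma would carry no information about the weight and could not do its job in Theorem \ref{t5.1}.

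For the intended relation your approach collapses at the second step: from $A\bS^*=S^*A$ and $\bS^*\bk_{\bar\la}=\la\bk_{\bar\la}$ you only get $S^*(A\bk_{\bar\la})=\la A\bk_{\bar\la}$, i.e. $A\bk_{\bar\la}\in\ker(S^*-\la I)=\spn\{k_{\bar\la}\}$, which is no contradiction at all --- it merely says $A\bk_{\bar\la}=c(\la)\,k_{\bar\la}$ for some scalar $c(\la)$, and one must still show $c\equiv 0$. The paper's proof goes by a different mechanism: take adjoints to get $\bS A^*=A^*S$, iterate to $\bS^nA^*=A^*S^n$, and compare norms. Since $S$ is an isometry on $H^2$, the right-hand side satisfies $\|A^*S^nf\|_{H^2_w}\le\|A^*\|\,\|f\|_{H^2}$ uniformly in $n$; on the other hand, if $A^*f=\sum_j a_jz^j\ne0$ and $a_m\ne0$, then $\|\bS^nA^*f\|_{H^2_w}^2=\sum_j|a_j|^2w_{j+n}\ge|a_m|^2w_{m+n}$, which is unbounded in $n$ precisely because $\sup_nw_n=\infty$. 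This is where the hypothesis enters, and it is the step your proposal is missing.
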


\begin{proof} Let  $A\bS^*=S^*A$ for some $A\ne 0$. Then  $\bS A^* = A^*S$ and therefore $\bS^n A^* = A^* S^n$.  

Take $f\in H^2$ such that $A^*f=\sum_0^\infty a_n z^n \ne0$.  Pick $m$ such that  $a_m\ne 0$. Then  
$$
\| \bS^n A f\|^2 = \sum_{j=0}^\infty |a_j|^2 w_{j+n} \ge |a_m|^2 w_{m+n}, 
$$
so $\sup_n \|\bS^n Af\| = \infty$ because $\sup_n w_{m+n}=\infty$. 

On the other hand,  $\|A^*S^n f\|_{H^2_w} \leq \|A^*\|\|f\|_{H^2}$, giving us a contradiction.
\end{proof}

So, to prove the theorem, we need to find an unbounded sequence $\{w_n\}_1^\infty$ satisfying \eqref{5.3}, and such that  \eqref{5.4} holds. 
We define the sequence $\{w_n\}_1^\infty$ to be $1$ for all $n$ except  in  sparse intervals from $N_j$ to $N_j+2j$; the numbers $N_j$ will  be specified later. On  the intervals $[N_j, N_j+2j]$ the sequence has ``spikes'': $\ln w_n$ on $[N_j, N_j+2j]$  is the piecewise affine function with slope $\pm2\ln(1+\e)$, increasing from $0$ to $2j\ln(1+\e)$ on $[N_j, N_j+j]$ and decreasing back to $0$ on $[N_j+j, N_j+2j]$; see Figure \ref{fig1}.

Formally, we  write
$$
\ln w_n=
\begin{cases} 2m \ln(1+\epsilon) &n=N_k+m,\  0 \leq m \leq j,
\\
2(j-m)\ln(1+\epsilon) & n=N_k+j+m, \ 0 \leq m \leq j, 
\\
0 & \text{ otherwise. }
\end{cases}
$$
We pick a sequence $N_j$ such that 
\begin{align*}
N_j+2j  & < N_{j+1} \\
\frac{2j-1}{N_j+2j} & \leq \frac{\alpha}{2^j} \,, 
\end{align*}
where $\alpha$ is a small number such that $1-\alpha\ge(1+\e)^{-2}$. 

The sequence $\{w_n\}_0^\infty$ is clearly unbounded. Because of the slope condition for $\ln w_n$ condition \eqref{5.3} is satisfied.

\setlength{\unitlength}{0.6pt}
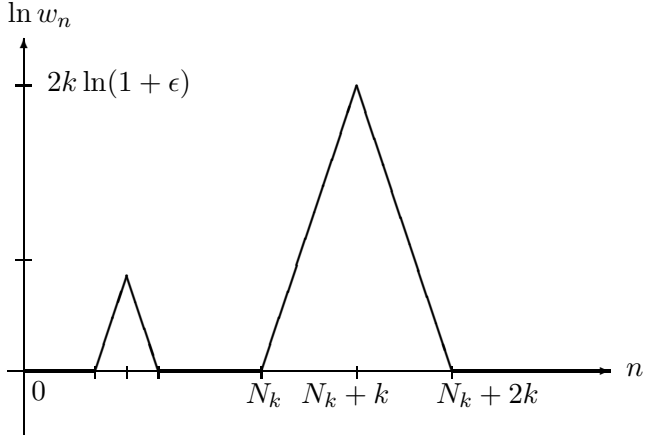
\begin{figure}
\begin{center}
\begin{picture} (350,300)

\put (-10,32){$0$}
\put (-0, 227){$2k\ln(1+\epsilon)$}
\put (-15,10){\vector(0,1){250}}
\put(-25,50){\vector(1,0){380}}
\put (-25, 270){$\ln w_n$}
\put (365,47){$n$}

\put(30,45){\line(0,1){8}}
\put(50,45){\line(0,1){8}}
\put(70,45){\line(0,1){8}}

\put(135,45){\line(0,1){8}}
\put(195,45){\line(0,1){8}}
\put(255,45){\line(0,1){8}}
\put(125,30){$N_{k}$}
\put(160, 30){$N_{k}+k$}
\put(245,30){$N_{k}+2k$}

\put(-20,120){\line(1,0){10}}
\put(-20,230){\line(1,0){10}}


\thicklines
\put(-15,50){\line(1,0){45}}

\thicklines
\put(30,50){\line(1,3){20}}

\thicklines
\put(50,110){\line(1,-3){20}}

\thicklines
\put(70,50){\line(1,0){65}}

\thicklines
\put(135,50){\line(1, 3){60}}

\thicklines
\put(195,230){\line(1,-3){60}}

\thicklines
\put(255,50){\line(1,0){100}}

\end{picture}
\end{center}
\caption{The function $\ln w_n$: two ``spikes'' are shown}%
{\protect\label{fig1}}%
\end{figure}

The Lemma below completes the proof of the theorem.

\begin{lm}
\label{l5.3}
For the weight $w=\{w_n\}_0^\infty$ constructed above, the reproducing kernel $\bk_\la$ of $H^2_w$ satisfies the inequality \eqref{5.4}.
\end{lm}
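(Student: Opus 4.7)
The plan is to compute both sides of \eqref{5.4} at $z=\la$ and compare them term by term. Setting $r:=|\la|^2\in[0,1)$, formula \eqref{5.1} gives
\[
\bk_\la(\la)=\sum_{n\ge0}\frac{r^n}{w_n},\qquad k_\la(\la)=\sum_{n\ge0}r^n=\frac{1}{1-r}.
\]
The upper bound $\bk_\la(\la)\le(1+\e)^2 k_\la(\la)$ is immediate from $w_n\ge 1$ (in fact one gets the sharper $\bk_\la(\la)\le k_\la(\la)$ for free). The real content is the lower bound, which is what I would focus on.

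Since $w_n=1$ off the spike intervals $I_j:=[N_j,N_j+2j]$, I would rewrite
\[
k_\la(\la)-\bk_\la(\la)=\sum_{j\ge1}\sum_{n\in I_j}r^n\!\left(1-\tfrac{1}{w_n}\right)\le\sum_{j\ge1}\sum_{n\in I_j}r^n=\sum_{j\ge1}\frac{r^{N_j}(1-r^{2j+1})}{1-r}.
\]
Dividing by $k_\la(\la)=1/(1-r)$ reduces the lower half of \eqref{5.4} to the uniform estimate
\[
R(r):=\sum_{j\ge1}r^{N_j}(1-r^{2j+1})\le 1-(1+\e)^{-2}\qquad (r\in[0,1)).
\]

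To bound $R(r)$ I would combine two elementary inequalities: the telescoping bound $1-r^{2j+1}\le(2j+1)(1-r)$, and the one-variable estimate $\max_{r\in[0,1]}(1-r)r^{N_j}\le 1/(N_j+1)$ (maximum attained at $r=N_j/(N_j+1)$). Together these give termwise
\[
r^{N_j}(1-r^{2j+1})\le\frac{2j+1}{N_j+1}.
\]
The sparsity hypothesis $(2j-1)/(N_j+2j)\le\alpha/2^j$ built into the construction then yields $(2j+1)/(N_j+1)\le C\alpha\,2^{-j}$ for a universal constant $C$, and summing gives $R(r)\le C\alpha$. Choosing $\alpha$ sufficiently small at the outset (the construction allows this by simply enlarging the gaps $N_{j+1}-N_j-2j$) one arranges $C\alpha\le 1-(1+\e)^{-2}$, which finishes the proof.

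The only subtlety is the uniformity in $r$: for $r$ small the factor $r^{N_j}$ is tiny, while for $r$ close to $1$ the factor $1-r^{2j+1}$ is tiny, but neither regime alone gives a useful bound. The device $(1-r)r^{N_j}\le(N_j+1)^{-1}$ is what reconciles the two regimes, and the geometric growth required of $\{N_j\}$ is precisely what is needed to make the resulting series summable.
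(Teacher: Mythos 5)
Your proof is correct and follows essentially the same route as the paper: both reduce the lower bound in \eqref{5.4} to estimating the geometric sum over each spike interval and then maximizing over $r=|\la|^2$, using the sparsity condition on $N_j$ to make the resulting series summable. The only difference is that the paper maximizes $x^{N_j+1}-x^{N_j+2j}$ exactly and lands on the bound $\alpha/2^j$ with no loss, whereas your termwise inequalities cost a universal factor $C$ that you (correctly) absorb by shrinking $\alpha$ in the construction.
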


\begin{proof}[Proof of Lemma \ref{5.3}]
Since $w_n\ge 1$, 
$$
\bk_\la(\lambda)=\sum_{n  \geq 0} \frac{1}{w_n} |\lambda|^{2n} \leq \sum_{n \geq 0}|\lambda|^{2n} = k_\la(\la) \le (1+\e)^2 k_\la(\la), 
$$ 
so one estimate is obvious. To get the other one, it is enough to show that $k_\la(\la)-\bk_\la(\la)\le \alpha k_\la(\la)$. 

Since $1-1/w_n\ne0$ only for $n\in (N_j, N_j+2j)$, we can write 
$$
k_\la(\la)-\bk_\la(\la) = \sum_{n \geq 0} \left(1-\frac{1}{w_n}\right)|\lambda|^{2n} =
\sum_{j=1}^\infty \sum_{n=N_j+1}^{N_1+2j-1} \left(1-\frac{1}{w_n}\right)|\lambda|^{2n}.
$$
For each $j$
\begin{align*}
\sum_{n=N_j+1}^{N_j+2j-1} \left(1-\frac{1}{w_n}\right)|\lambda|^{2n} & \le 
\sum_{n=N_j+1}^{N_j+2j-1} |\lambda|^{2n} 
\\
& = \frac{|\la|^{2(N_j+1)} - |\la|^{2(N_j+2j)}}{1-|\la|^2} \le \frac{A_j}{1-|\la|^2},
\end{align*}
where
$$
A_j = \max\{ x^{N_j+1} - x^{N_j+2j}: 0\le x\le 1\}  
$$
(here $x=|\la|^2$). The maximum is attained at $x=\left(\frac{N_j+1}{N_k+2j}\right)^{\frac{1}{2j-1}}$, and 
$$
A_j = \left(\frac{N_j+1}{N_j+2j}\right)^{\frac{N_j+1}{2j-1}}\frac{2j-1}{N _j+2j} \le \frac{2j-1}{N _j+2j} \le \frac{\alpha}{2^j}
$$
by our choice of $N_j$. 

Summing over $j$ we get 
$$
k_\la(\la)-\bk_\la(\la) \le \frac{1}{1-|\la|^2}\sum_{j=1}^\infty \alpha 2^{-j} = \frac{\alpha}{1-|\la|^2} =\alpha k_\la(\la),
$$
so the lemma is proved. 
\end{proof}

\def\cprime{$'$}
\providecommand{\bysame}{\leavevmode\hbox
to3em{\hrulefill}\thinspace}

\end{document}